\numberwithin{figure}{section}
\newtheorem{theorem}{Theorem}[section]
\newtheorem{lemma}[theorem]{Lemma}
\newtheorem{proposition}[theorem]{Proposition}
\newtheorem{corollary}[theorem]{Corollary}
\theoremstyle{definition}
\newtheorem{definition}[theorem]{Definition}
\newtheorem{remark}[theorem]{Remark}
\numberwithin{equation}{section}
\newcommand{\abs}[1]{\left\lvert#1\right\rvert}
\newcommand{\norm}[1]{\left\lVert#1\right\rVert}
\newcommand{\R}{\mathbb{R}}
\newcommand{\Rn}{\R^n}
\newcommand{\dom}{\Omega}
\newcommand{\eps}{\varepsilon}
\newcommand{\dif}{\ensuremath{\,\mathrm{d}}}
\newcommand{\diver}{\operatorname{div}}
\newcommand{\p}{{p(x)}}
\newcommand{\ud}[0]{\ensuremath{\,\textrm{d}}}
\newcommand{\loc}{\textrm{loc}}
\newcommand{\tr}{\operatorname{trace}}
\newcommand{\Om}{\Omega}
\newcommand{\diam}{\operatorname{diam}}
\newcommand{\ol}{\overline}
\DeclareMathOperator*{\essliminf}{ess\,lim\,inf}
\DeclareMathOperator*{\essinf}{ess\,inf}
\def\ocirc#1{\ifmmode\setbox0=\hbox{$#1$}\dimen0=\ht0 \advance\dimen0
  by1pt\rlap{\hbox to\wd0{\hss\raise\dimen0
      \hbox{\hskip.2em$\scriptscriptstyle\circ$}\hss}}#1\else
  {\accent"17 #1}\fi}
\def\vint_#1{\mathchoice%
          {\mathop{\kern 0.2em\vrule width 0.6em height 0.69678ex depth -0.58065ex
                  \kern -0.8em \intop}\nolimits_{\kern -0.4em#1}}%
          {\mathop{\kern 0.1em\vrule width 0.5em height 0.69678ex depth -0.60387ex
                  \kern -0.6em \intop}\nolimits_{#1}}%
          {\mathop{\kern 0.1em\vrule width 0.5em height 0.69678ex depth -0.60387ex
                  \kern -0.6em \intop}\nolimits_{#1}}%
          {\mathop{\kern 0.1em\vrule width 0.5em height 0.69678ex depth -0.60387ex
                  \kern -0.6em \intop}\nolimits_{#1}}}
\newcommand{\limminus}{{\mathchoice{\raise.17ex\hbox{$\scriptstyle -$}}
                {\raise.17ex\hbox{$\scriptstyle -$}}
                {\raise.1ex\hbox{$\scriptscriptstyle -$}}
                {\scriptscriptstyle -}}}
\newcommand{\limplus}{{\mathchoice{\raise.17ex\hbox{$\scriptstyle +$}}
                {\raise.17ex\hbox{$\scriptstyle +$}}
                {\raise.1ex\hbox{$\scriptscriptstyle +$}}
                {\scriptscriptstyle +}}}
\newcommand{\vp}{\varphi}
\newcommand{\parts}[2]{\frac{\partial {#1}}{\partial {#2}}}
\newcommand{\subjclassname@schuppidoo}{\textup{2010} Mathematics Subject
  Classification}
\begin{document}

\title[Equivalence of viscosity and weak solutions]{Equivalence of
  viscosity and weak solutions for the $\p$-Laplacian}

\author{Petri Juutinen, Teemu Lukkari, and Mikko Parviainen}

\address{Petri Juutinen, Department of Mathematics and Statistics,
P.O.Box 35, FIN-40014 University of Jyv\"askyl\"a, Finland}
\email{petri.juutinen@jyu.fi}

\address{Teemu Lukkari, Department of Mathematical Sciences,
NTNU,
NO-7491 Trondheim,
Norway}
\email{teemu.lukkari@math.ntnu.no}

\address{Mikko Parviainen,
Aalto University, School of Science and Technology, Institute of Mathematics,
PO Box 11000, FI-00076 Aalto,
Finland
}
\email{mikko.parviainen@tkk.fi}

\thanks{}

\keywords{Comparison principle, viscosity solutions, uniqueness,
  $\p$-superharmonic functions, Rad\'o type theorem, removability}

\subjclass[schuppidoo]{Primary 35J92, Secondary 35D40, 31C45, 35B60}

\date{\today}

\begin{abstract}
  We consider different notions of solutions to the $\p$-Lap\-la\-ce
  equation
  \[
  -\diver(\abs{Du(x)}^{p(x)-2}Du(x))=0
  \]
  with $ 1<\p<\infty$. We show by proving a comparison principle that
  viscosity supersolutions and  $\p$-superharmonic functions of
  nonlinear potential theory coincide. This implies that weak and
  viscosity solutions are the same class of functions, and that
  viscosity solutions to Dirichlet problems are unique. As an
  application, we prove a Rad\'o type removability theorem.
\end{abstract}

\maketitle

\section{Introduction}

During the last fifteen years, variational problems and partial differential
equations with various types of nonstandard growth conditions have become
increasingly popular. This is partly due to their frequent appearance in applications
such as the modeling of electrorheological fluids \cite{AcerbiMingione2,Ruzicka} and
image processing \cite{CLR}, but these problems are very interesting from a purely mathematical
point of view as well.

In this paper, we focus on a particular example, the $\p$-Laplace equation
\begin{equation}\label{intro:pxlap}
  -\Delta_{\p} u(x) :=-\diver(\abs{Du(x)}^{p(x)-2}Du(x))=0
\end{equation}
with $ 1<\p<\infty$. This is a model case of a problem exhibiting so-called $\p$-growth,
which were first considered by Zhikov in \cite{Zhikov}. Our interest is directed
at the very notion of a solution to \eqref{intro:pxlap}.
Since this equation is of divergence form, the
most natural choice is to use the distributional weak solutions, whose
definition is based on integration by parts.  However, if the variable
exponent $x\mapsto \p$ is assumed to be continuously differentiable,
then also the notion of viscosity solutions, defined by means of
pointwise touching test functions, is applicable. Our objective is to
prove that weak and viscosity solutions to the $\p$-Laplace equation
coincide. The proof also implies the uniqueness of viscosity solutions
of the Dirichlet problem. For a constant $p$, similar results were
proved by Juutinen, Lindqvist, and Manfredi in \cite{juutinenlm01}.

The modern theory of viscosity solutions, introduced by Crandall and
Lions in the eighties, has turned out to be indispensable.  It
provides a notion of generalized solutions applicable to fully
nonlinear equations, and crucial tools for results related to
existence, stability, and uniqueness for first and second order
partial differential equations, see for example Crandall, Ishii, and Lions \cite{crandallil92}, Crandall
  \cite{crandall}, and Jensen \cite{jensen88}.  Viscosity theory has also been used in
stochastic control problems, and more recently in stochastic games,
see for example \cite{peresssw09}. The variable
exponent viscosity theory is also useful. Indeed, as an application, we prove a Rad\'o
type removability theorem: if a function $u\in C^1(\Om)$ is a solution
to \eqref{intro:pxlap} outside its set of zeroes $\{x\colon u(x)=0\}$,
then it is a solution in the whole domain $\Om$.  A similar result
also holds for the zero set of the gradient. We do not know how to
prove this result without using both the concept of a weak solution
and that of a viscosity solution.

To prove our main result, we show that viscosity supersolutions of the $\p$-Laplace
equation are the same class of functions as $\p$-superharmonic
functions, defined as lower semicontinuous functions obeying the
comparison principle with respect to weak solutions. The equivalence
for the solutions follows from this fact at once. A simple application
of the comparison principle for weak solutions  shows that $\p$-superharmonic functions are
viscosity supersolutions. The reverse implication, however, requires
considerably more work. To show that a viscosity supersolution obeys
the comparison principle with respect to weak solutions, we first show  that weak solutions of
\eqref{intro:pxlap} can be approximated by the weak solutions of
\begin{equation}
  \label{temp1}
  -\Delta_\p u=-\eps
\end{equation}
and then prove a comparison principle between viscosity supersolutions
and weak solutions of \eqref{temp1}. The comparison principle for
viscosity sub-- and supersolutions can be reduced to this result as
well.

Although the outline of our proof is largely the same as that of \cite{juutinenlm01}
for the constant $p$ case, there are several significant
differences in the details.  Perhaps the most important of them is the
fact that the $\p$-Laplace equation is not translation invariant.
 At first thought, this property may not seem that
 consequential, but one should bear in mind that we are dealing with
 generalized solutions, not with classical solutions.
The core of our argument, the proof of the comparison principle for
viscosity solutions, is based on the maximum principle for
semicontinuous functions.  Applying this principle is not entirely
trivial even for such simple equations as
\begin{equation}
  \label{eq:simple-eq}
  -\diver(a(x)D u)=0,
\end{equation}
with a smooth and strictly positive coefficient $a(x)$. In the case of
the $\p$-Laplacian, the proof is quite delicate. We need to carefully
exploit the information coming from the maximum principle for
semicontinuous functions, and properties such as the local
Lipschitz continuity of the matrix square root as well as the regularity of
weak solutions of \eqref{temp1} play an essential role in the proof.

In addition, we have to take into
account the strong singularity of the equation at the points where
the gradient vanishes and $\p<2$.  Further, since
$1<\p<\infty$, the equations we encounter can be singular in some
parts of the domain and degenerate in others, and we have to find a
way to fit together estimates obtained in the separate cases. For a constant $p$, this problem never
occurs. Finally, if we carefully compute
$\Delta_\p u$, the result is the expression
\begin{equation}\label{intro:expanded}
  \begin{split}
    -\Delta_\p u(x)=&-\abs{D u}^{\p-2}
    \left(\Delta u +(\p-2)\Delta_\infty u\right)\\
    &-\abs{Du}^{\p-2}D\p\cdot Du \,\log\abs{Du},
  \end{split}
\end{equation}
where
\begin{displaymath}
  \Delta_\infty u:=\abs{Du}^{-2} D^2 u \,Du\cdot Du
\end{displaymath}
is the normalized $\infty$-Laplacian.  Obviously, the first order term
involving $\log\abs{Du}$ does not appear if $\p$ is constant.

% At first thought, this property may not seem that
% consequential, but one should bear in mind that we are dealing with
% generalized solutions, not with classical solutions.

The $\p$-Laplacian \eqref{intro:pxlap} is not only interesting in its
own right but also provides a useful test case for generalizing the
viscosity techniques to a wider class of equations as indicated by
\eqref{eq:simple-eq}. One important observation is that our proof
uses heavily the well established theory of weak solutions. More
precisely, we repeatedly exploit the existence, uniqueness and
regularity of the weak solutions to \eqref{intro:pxlap}.  In
particular, we use the fact that weak solutions can be
approximated by the weak solutions of \eqref{temp1}. To emphasize this
point, we consider another variable exponent version of the
$p$-Laplace equation, given by
\begin{equation}\label{intro:normalized_eq}
  -\Delta_\p^N u(x):= -\Delta u(x)-(p(x)-2)\Delta_\infty u(x)=0.
\end{equation}
The study of this equation has been recently set forth in
\cite{hastoa10}, and it certainly looks simpler than
\eqref{intro:expanded}. Indeed, one can quite easily prove a comparison
principle for viscosity subsolutions and strict supersolutions.  However, owing to the
incompleteness of the theory of weak solutions of
\eqref{intro:normalized_eq}, the full comparison principle and the
equivalence of weak and viscosity solutions remain open.

\tableofcontents

\section{The spaces $L^{\p} $ and $W^{1,\p}$}

\label{sec:spaced-out}

In this section, we discuss the variable exponent Lebesgue and Sobolev
spaces. These spaces provide the functional analysis framework for
weak solutions. Most of the results below are from \cite{kovacikr91}.

Let $p\colon \R^n\to [1,\infty)$, called a variable exponent, be in
$C^1(\R^n)$ and let $\Omega$ be a bounded open set in $\R^n$.  We denote
\begin{displaymath}
  p^+=\sup_{x\in\Omega}\p \quad \text{and} \quad p^-=\inf_{x\in \Omega}\p,
\end{displaymath}
and assume that
\begin{equation}
\label{eq:p-bounds}
\begin{split}
 1< p^-\le p^+<\infty.
\end{split}
\end{equation}

Observe that our arguments are local, so it would suffice to assume that
\eqref{eq:p-bounds} holds in compact subsets of $\Om$. However, for simplicity of notation, we use the stronger assumption.

%However, the stronger assumption is natural in the sense that then Corollary \ref{cor:unique} implies that viscosity and weak solutions to the %Dirichlet problem are the same.

The \emph{variable exponent Lebesgue space $L^{p(\cdot)}(\Omega)$}
consists of all measurable functions $u$ defined on $\Omega$ for which
the $\p$-modular
\begin{displaymath}
  \varrho_{\p}(u)=\int_\Omega\abs{u(x)}^{\p}\ud x
\end{displaymath}
is finite.  The Luxemburg norm on this space is defined as
\begin{displaymath}
  \norm{u}_{L^{\p} (\Omega)} =\inf\Big\{\lambda > 0:\int_\Omega\,
  \left\vert \frac{u(x)}{\lambda}\right\vert^{\p}\ud x\leq 1\Big\}.
\end{displaymath}
Equipped with this norm $L^{\p}(\Omega)$ is a Banach space.  If $\p$
is constant, $L^{\p}(\Omega)$ reduces to the standard Lebesgue space.

In estimates, it is often necessary to switch between the norm and the
modular. This is accomplished by the coarse but useful
inequalities
\begin{equation}\label{eq:ModularIneq3}
\begin{split}
  \min\{\norm{u}_{L^{\p} (\Omega)}^{p^+},\norm{u}_{L^{\p}
    (\Omega)}^{p^-}\}&\leq \varrho_{\p}(u) \\
    &\leq
  \max\{\norm{u}_{L^{\p} (\Omega)}^{p^+},\norm{u}_{L^{\p} (\Omega)}^{p^-}\},
\end{split}
\end{equation}
which follow from the definition of the norm in a straightforward manner.
%see \cite[Theorem 2.1]{fanzhao01}.
Note that these inequalities imply the equivalence of convergence in
norm and in modular. More specifically,
\begin{displaymath}
  \norm{u-u_i}_{L^{\p}(\Omega)} \to 0\quad \text{if and only if}\quad
  \varrho_{\p}(u-u_i)\to 0
\end{displaymath}
as $i\to \infty$.

A version of H\"older's inequality,
\begin{equation}\label{eq:hoelder}
  \int_{\Omega}fg\ud x\leq C
  \norm{f}_{L^{\p} (\Omega)}\norm{g}_{L^{p'(x)} (\Omega)},
\end{equation}
holds for functions $f\in L^{\p}(\Omega)$ and $g\in
L^{p'(x)}(\Omega)$, where the conjugate exponent $p'(x)$ of $\p$ is
defined pointwise. Further, if $1< p^-\le p^+<\infty$, the dual of
$L^{\p}(\Omega)$ is $L^{p'(x)}(\Omega)$, and the space
$L^{\p}(\Omega)$ is reflexive.

The \emph{variable exponent Sobolev space $W^{1,\p}(\Omega)$} consists
of functions $u\in L^{p(\cdot)}(\Omega)$ whose weak gradient $D u$
exists and belongs to $L^{p(\cdot)}(\Omega)$.  The space
$W^{1,p(\cdot)}(\Omega)$ is a Banach space with the norm
\begin{displaymath}
      \|u\|_{W^{1,\p}(\Omega)}
      =\|u\|_{L^{\p}(\Omega)}+\|D u\|_{L^{\p} (\Omega)}.
\end{displaymath}
The local Sobolev space $W^{1,\p}_{\loc} (\Omega)$ consists of
functions $u$ that belong to $W^{1,\p}_{\loc} (\Omega')$ for all open
sets $\Omega'$ compactly contained in $\Omega$.

The density of smooth functions in $W^{1,\p}(\Omega)$ turns out to be
a surprisingly nontrivial matter. However, our assumption $\p\in C^1$,
or even the weaker $\log$-H\"older continuity, implies that smooth
functions are dense in the Sobolev space $W^{1,\p}(\Omega)$, see
\cite{Diening, samko02}. Due to the H\"older inequality
\eqref{eq:hoelder}, density allows us to pass from smooth test
functions to Sobolev test functions in the definition of weak
solutions by the usual approximation argument.

The Sobolev space with zero boundary values, $W^{1,\p}_0(\Omega)$, is
defined as the completion of $C_0^\infty(\Omega)$ in the norm of
$W^{1,\p}(\Omega)$. The following variable exponent
Sobolev-Poincar\'e inequality for functions with zero boundary values
was first proven by Edmunds and R{\'a}kosn{\'{\i}}k in
\cite{edmundsr00}.
\begin{theorem}
  \label{thm:poincare}
  For every $u \in W^{1,\p}_0(\Om)$, the inequality
  \begin{equation}
    \label{eq:poincare}
    \norm{u}_{L^{\p}(\Om)}\leq C \diam(\Om) \norm{D u}_{L^{\p}(\Om)}
  \end{equation}
  holds with the constant $C$ depending only on the dimension $n$ and $p$.
\end{theorem}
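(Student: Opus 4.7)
The plan is to reduce the inequality to a pointwise bound by the Hardy--Littlewood maximal function and then invoke its $L^{\p}$-boundedness. By the definition of $W^{1,\p}_0(\Om)$ as the closure of $C_0^\infty(\Om)$ in $W^{1,\p}(\Om)$, and since $W^{1,\p}$-convergence controls both $\|u\|_{L^{\p}(\Om)}$ and $\|Du\|_{L^{\p}(\Om)}$, it suffices to prove \eqref{eq:poincare} for $u\in C_0^\infty(\Om)$. So I would fix such a $u$, extend it by zero to $\R^n$, and set $d=\diam(\Om)$.

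After translating so that $\Om\subset(0,d)\times\R^{n-1}$, the fundamental theorem of calculus in the $x_1$-direction gives, for every $x=(x_1,x')\in\Om$,
\[
  \abs{u(x)}=\abs{\int_0^{x_1}\partial_1 u(t,x')\dif t}\leq \int_0^d\abs{Du(t,x')}\chi_\Om(t,x')\dif t.
\]
Combining this with analogous bounds in a complementary direction (or averaging over a cone of directions) yields the Riesz-type estimate $\abs{u(x)}\leq C(n)\int_\Om\abs{Du(y)}\abs{x-y}^{-(n-1)}\dif y$, and since the integrand is supported in $B(x,d)$, a dyadic annular decomposition of that ball produces
\[
  \abs{u(x)}\leq C(n)\,d\cdot M(\abs{Du}\chi_\Om)(x),
\]
where $M$ is the Hardy--Littlewood maximal operator.

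The assumption $p\in C^1(\R^n)$ makes $p$ log-Hölder continuous on bounded sets, so by Diening's theorem $\|Mf\|_{L^{\p}(\R^n)}\leq C(n,p)\|f\|_{L^{\p}(\R^n)}$. Applied with $f=\abs{Du}\chi_\Om$, this yields $\|u\|_{L^{\p}(\Om)}\leq C(n,p)\,d\,\|Du\|_{L^{\p}(\Om)}$, which is \eqref{eq:poincare}. The hard part is this $L^{\p}$-boundedness of $M$: for genuinely variable exponents it can fail without some regularity on $p$, and Diening's proof is substantial even under log-Hölder continuity. A more elementary alternative (the Edmunds--R\'akosn\'{\i}k route) bypasses $M$ by applying Hölder's inequality pointwise in $x$ (so the exponent $p(x)$ is treated as a constant) to the one-dimensional representation, obtaining
\[
  \abs{u(x)}^{p(x)}\leq d^{p(x)-1}\int_0^d\abs{Du(t,x')}^{p(x_1,x')}\chi_\Om(t,x')\dif t,
\]
after which one must reconcile the exponent $p(x_1,x')$ appearing here with the desired $p(t,x')$ by splitting according to whether $\abs{Du}\leq 1$ or $\abs{Du}>1$ and using the continuity of $p$. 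This exponent reconciliation is precisely the one step where the variable-exponent argument departs meaningfully from the classical Poincaré proof, and it is the real obstacle in any self-contained approach.
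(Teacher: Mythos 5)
Note first that the paper offers no proof of Theorem \ref{thm:poincare} at all; it states the result and refers to Edmunds and R\'akosn\'{\i}k \cite{edmundsr00} for the original proof, so there is no in-paper argument to compare against and your proposal has to stand or fall on its own. Your primary route --- the pointwise Riesz-potential bound $\abs{u(x)}\le C(n)\int_\Om\abs{Du(y)}\,\abs{x-y}^{1-n}\dif y$, followed by a dyadic annular decomposition of $B(x,d)$, $d=\diam(\Om)$, to get $\abs{u(x)}\le C(n)\,d\,M(\abs{Du}\chi_\Om)(x)$, and then $L^{\p}$-boundedness of the maximal operator --- is a legitimate and by now standard argument in the variable exponent setting, and the reduction to $C_0^\infty(\Om)$ by density is fine. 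The one technical slip is the form of Diening's theorem you invoke: $\norm{Mf}_{L^{\p}(\R^n)}\le C\norm{f}_{L^{\p}(\R^n)}$ on all of $\R^n$ requires, beyond local log-H\"older continuity, a decay condition on $p$ at infinity which $p\in C^1(\R^n)$ alone does not give. This is harmless here, though: your pointwise bound only involves balls of radius at most $d$, so the truncated maximal operator $\sup_{0<r\le d}\vint_{B(x,r)}\abs{f}\dif y$ suffices, and its boundedness on $L^{\p}$ of a bounded domain follows from local log-H\"older continuity alone; that is precisely the statement in the paper's reference \cite{Diening}. So you should replace the global $\R^n$ bound by this bounded-domain version. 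Your ``elementary alternative'' is a fair description of the Edmunds--R\'akosn\'{\i}k strategy and correctly singles out the exponent mismatch (the $p(x_1,x')$ on the integrand versus the $p(t,x')$ you would need) as the crux, but as written it stops short of actually closing that step, so it is a plan rather than a proof; the first route, with the corrected reference to Diening, is the one that actually goes through.
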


%%%%%%%%%%%%%%%%%%%%%%%%%%%%%%%%%%%%%%%%%%%%%%%%%%%%%%%%%%%%%%%%%%%%%%%%

\section{Notions of solutions}

\label{sec:notions}

In this section, we discuss the notions of weak solutions,
$\p$-super\-har\-monic functions and viscosity solutions to the
equation
\begin{equation}
  \label{eq:div-equation}
  \begin{split}
    -\Delta_{\p} u(x)= -\diver(\abs{Du(x)}^{p(x)-2}Du(x))=0.
  \end{split}
\end{equation}

\begin{definition}
  \label{def:div-weak}
  A function $u \in W^{1,\p}_{\text{loc}}(\Om)$ is a weak supersolution
  to \eqref{eq:div-equation} in $\Om$ if
  \begin{equation}
    \label{eq:div-weak}
    \begin{split}
      \int_\Om \abs{D u(x)}^{p(x)-2}Du(x)\cdot D\vp(x) \ud x\geq 0
    \end{split}
  \end{equation}
  for every nonnegative test function $\vp \in C^{\infty}_0(\Om)$.
  For subsolutions, the inequality in \eqref{eq:div-weak} is reversed, and a function $u$ is
  a weak solution if it is both a super-- and a subsolution, which
  means that we have equality in \eqref{eq:div-weak} for all $\vp \in
  C^{\infty}_0(\Om)$.
\end{definition}

If $u \in W^{1,\p}(\Om)$, then by the usual approximation argument, we may employ test functions
belonging to $W^{1,\p}_0(\Omega)$. Note also that $u$ is a subsolution if and only if $-u$ is a
supersolution.

Since \eqref{eq:div-equation} is the Euler-Lagrange equation of the
functional
\[
v\mapsto \int_\Om \frac1\p\abs{Dv}^\p\ud x,
\]
the existence of a weak solution $u \in W^{1,\p}(\Omega)$ such that
$u-g\in W^{1,\p}_0(\Om)$ for a given $g\in W^{1,\p}(\Om)$ readily
follows by the direct method of the calculus of variations.  Moreover,
by regularity theory \cite{AcerbiMingione, alkhutov97, FanZhaoQM},
there always exists a locally H\"older continuous representative of a
weak solution.

Assume for a moment that $\p$ is radial, and that $p^+<n$. In such a
case, we may modify a well-known example, the fundamental solution of
the $p$-Laplacian.  Indeed, consider the function
\begin{displaymath}
  v(x)=\int_{\abs{x}}^1(p(r)r^{n-1})r^{-1/(p(r)-1)}\dif r
\end{displaymath}
in the unit ball $B(0,1)$ of $\R^n$. Then $v$ is a solution in
$B(0,1)\setminus \{0\}$, but not a weak supersolution in the whole ball
$B(0,1)$. See \cite[Section 6]{harjulehtohklm07} for the details.  A
computation shows that
\begin{displaymath}
  \int_{B(0,\rho)}\abs{D v}^{p(x)}\dif x=\infty
\end{displaymath}
for any $\rho<1$.  To include functions like $v$ in our discussion, we
use the following class of $\p$-superharmonic functions.

\begin{definition}
  \label{def:div-harmonic}
  A function $u: \Om\to  (-\infty,\infty]$ is \emph{$\p$-superharmonic}, if
  \begin{enumerate}
  \item $u$ is lower semicontinuous
  \item $u$ is finite almost everywhere and
  \item \label{itm:superharm-comp} the comparison principle holds: if
    $h$ is a weak solution to \eqref{eq:div-equation} in
    $D\Subset\Om$, continuous in $\ol D$, and
    \[
    u \geq  h \quad \text{on}\quad \partial D,
    \]
    then
    \[
    u\geq   h  \quad \text{in}\quad D.
    \]
  \end{enumerate}

A function $u: \Om\to  [-\infty,\infty)$ is \emph{$\p$-subharmonic}, if $-u$ is
$\p$-super\-har\-monic.
\end{definition}

In the case of the Laplace equation, that is, $\p\equiv 2$, this potential
theoretic definition of superharmonic functions goes back to
F. Riesz. For constant values $p\ne 2$, $p$--superharmonic functions
and, in particular, their relationship to the weak supersolutions were
studied by Lindqvist in \cite{lindqvist86}.
We next review briefly some relevant facts known in the variable
exponent case. We shall use the lower semicontinuous regularization
\begin{equation}
  \label{essliminf}
  u^*(x)=\essliminf_{y\to x} u(y)=
  \lim_{R \to 0} \essinf_{B(x,R)} u.
\end{equation}

First, every weak supersolution has a lower semicontinuous representative which is $\p$-superharmonic.

\begin{theorem}\label{thm:lscrepresentative}
  Let $u$ be a weak supersolution in $\Om$. Then $u=u^*$ almost everywhere
  and $u^*$ is $\p$-superharmonic.
\end{theorem}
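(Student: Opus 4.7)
The plan is to verify for $u^*$ the three defining conditions in Definition~\ref{def:div-harmonic}. Lower semicontinuity is built into the definition of the essential lower limit, and a.e.\ finiteness of $u^*$ will follow from the identity $u=u^*$ a.e.\ together with $u\in L^{\p}_{\loc}(\Om)$. Thus the real tasks are (i) the a.e.\ identification $u=u^*$, and (ii) the comparison principle for $u^*$ against continuous weak solutions.

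Task (i) is the usual lsc-representative theorem for weak supersolutions of divergence-form equations: the Caccioppoli inequality for supersolutions together with a Moser-type iteration yields the pointwise bound $\essinf_{B(x,R)}u\ge u(x)-\omega(R)$ at almost every $x$, and letting $R\to 0$ gives $u^*(x)=u(x)$ for a.e.\ $x$. The variable-exponent version of this iteration has been carried out under assumption \eqref{eq:p-bounds} in the literature, so I invoke it directly rather than repeat the calculation. This is the main technical obstacle and the only step where the deep regularity theory enters.

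For (ii), fix $D\Subset\Om$ and a weak solution $h\in C(\ol D)$ with $u^*\ge h$ on $\partial D$. Given $\eps>0$, the set $F_\eps:=\{x\in\ol D:h(x)\ge u^*(x)+\eps\}$ is closed (since $h-u^*$ is upper semicontinuous) and disjoint from $\partial D$, so it is a compact subset of $D$. By lsc of $u^*$ and compactness I choose an open $D_\eps$ with $F_\eps\subset D_\eps\Subset D$ and $h<u^*+\eps$ on an open neighborhood of $\partial D_\eps$. Since $u=u^*$ a.e.\ from (i), the truncation $\vp:=(h-u-\eps)_+$ is a nonnegative element of $W^{1,\p}_0(D_\eps)$. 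Testing the supersolution inequality for $u$ and the equation for $h$ against $\vp$ and subtracting gives
\begin{displaymath}
  \int_{\{h>u+\eps\}\cap D_\eps}\bigl(|Du|^{\p-2}Du-|Dh|^{\p-2}Dh\bigr)\cdot(Du-Dh)\ud x\le 0.
\end{displaymath}
The pointwise monotonicity of $a\mapsto|a|^{\p-2}a$ (valid for $\p>1$) makes the integrand nonnegative, so $Du=Dh$ a.e.\ on $\{h>u+\eps\}\cap D_\eps$, whence $D\vp\equiv 0$, and Theorem~\ref{thm:poincare} forces $\vp\equiv 0$. Therefore $h\le u+\eps$ a.e.\ in $D_\eps$; passing to the essential lower limit (which commutes with the continuous function $h-\eps$) upgrades this to $h\le u^*+\eps$ pointwise in $D_\eps$, and combined with $h<u^*+\eps$ outside $D_\eps$ we obtain $h\le u^*+\eps$ throughout $D$. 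Letting $\eps\to 0$ concludes the comparison principle. The monotonicity/Poincaré step is routine; the genuine hurdle is (i).
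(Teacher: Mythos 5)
Your argument is correct and is compatible with what the paper does, but the paper outsources \emph{both} halves to the literature (\cite[Theorem 6.1]{harjulehtohklm07} for $u=u^*$ a.e.\ and \cite[Theorem 4.1]{HKL} for the comparison principle, i.e.\ $\p$-superharmonicity of $u^*$), whereas you cite only the lsc-representative step and prove the comparison principle from scratch. Your comparison argument is sound: the set $F_\eps=\{h\ge u^*+\eps\}$ is compact in $D$ by upper semicontinuity of $h-u^*$ and the boundary inequality; $\vp=(h-u-\eps)_+$ lies in $W^{1,\p}_0(D_\eps)$ because it vanishes a.e.\ near $\partial D_\eps$ and $h,u\in W^{1,\p}(D_\eps)$ (and smooth functions are dense, as the paper notes under $\p\in C^1$, so such test functions are admissible in the weak formulation); subtracting the two weak formulations against $\vp$, using the strict monotonicity \eqref{eq:monotonicity}, and then Poincar\'e gives $\vp\equiv 0$; and the passage from the a.e.\ inequality $h\le u+\eps$ to the pointwise inequality $h\le u^*+\eps$ via $\essliminf$ and the continuity of $h$ is exactly right. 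This is in effect a localized version of the paper's Lemma~\ref{lem:comparison}, adapted to the fact that $u^*\ge h$ is only known on $\partial D$ pointwise rather than in the Sobolev-trace sense. The only caveat, which you flag yourself, is that the a.e.\ identification $u=u^*$ --- the genuinely hard regularity input --- is taken as a black box in both your write-up and the paper.
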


This theorem follows from \cite[Theorem 6.1]{harjulehtohklm07} and
\cite[Theorem 4.1]{HKL}.  For the reverse direction, we have (see
\cite[Corollary 6.6]{harjulehtohklm07})
\begin{theorem}\label{thm:local-bnd}
  A locally bounded $\p$-superharmonic function is a weak
  supersolution.
\end{theorem}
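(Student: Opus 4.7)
Let $u$ be a locally bounded $p(x)$-superharmonic function in $\Om$, and fix $D\Subset\Om$ with $\sup_{\bar D} u \le M$ and $\inf_{\bar D} u \ge m$. My goal is to approximate $u$ from below by weak solutions, thereby obtaining both the Sobolev regularity $u\in W^{1,\p}_{\loc}(D)$ and the supersolution inequality \eqref{eq:div-weak}. The starting point is a standard boundary approximation: using the lower semicontinuity of $u$, I pick continuous functions $\vp_k$ on $\partial D$ with $\vp_k\nearrow u$ pointwise on $\partial D$. By the direct method (applied to the functional $v\mapsto \int_D \frac{1}{\p}|Dv|^\p$) together with the Sobolev--Poincar\'e inequality of Theorem~\ref{thm:poincare}, there is a weak solution $h_k$ of \eqref{eq:div-equation} in $D$ with $h_k-\vp_k\in W^{1,\p}_0(D)$, and by the regularity theory quoted after Definition~\ref{def:div-weak}, $h_k\in C(\ol D)$.

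The comparison property in Definition~\ref{def:div-harmonic}(\ref{itm:superharm-comp}) applied with $h_k$ then gives $h_k\le u$ in $D$, and the same property applied pairwise (using $h_{k+1}$ as a weak solution and $h_k$ as the superharmonic function, after noting that weak solutions are in particular $\p$-superharmonic by Theorem~\ref{thm:lscrepresentative}) shows that $\{h_k\}$ is monotone increasing. The maximum principle for weak solutions pins $h_k$ between $m$ and $M$. Next, using the test function $\eta^{p^+}(M-h_k)$ in the weak formulation and Young's inequality adapted to variable exponents, I get a uniform Caccioppoli estimate
\[
\int_D \eta^{p^+}|Dh_k|^{\p}\ud x \le C(M-m)^{p^+}\int_D (|D\eta|^{\p}+1)\ud x
\qquad(\eta\in C_0^\infty(D)).
\]
Combined with \eqref{eq:ModularIneq3}, this yields uniform $W^{1,\p}$ bounds on every $D'\Subset D$. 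By reflexivity and the monotonicity of $\{h_k\}$, $h_k\to h$ both pointwise and weakly in $W^{1,\p}_{\loc}(D)$, and $h$ is itself a weak solution in $D$.

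The crucial remaining step is to identify $h=u$ a.e.\ in $D$. One direction, $h\le u$, is immediate from $h_k\le u$. For the other direction I argue as follows: any $w\in C(\ol{D'})$, $D'\Subset D$, that is a weak solution in $D'$ with $w\le u$ on $\partial D'$ is dominated by $u$ in $D'$ by the superharmonicity of $u$; but by the same token $w\le h$ since $h$ is the monotone limit of such weak solutions matching $u$ ever more closely on $\partial D$. Taking $w$ to be Poisson modifications of $u$ on small balls and invoking the density of such balls, together with the fact that $u=u^*$ off a set of capacity zero (which requires knowing $u\in W^{1,\p}_{\loc}$, now established through $h$), one concludes $u=h$ pointwise outside a null set. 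Finally, since $u=h$ a.e.\ and $h$ is a weak solution, $u$ inherits the weak supersolution property, in fact with equality.

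The main obstacle is this last identification $h=u$ a.e. It is delicate because $u$ is only lower semicontinuous and a priori not in any Sobolev space; one has to bootstrap from the approximating weak solutions $h_k$ to get Sobolev regularity for $u$ itself, and only then use the comparison principle effectively to pin down $u$ pointwise a.e. In the variable exponent case, this bootstrap must simultaneously control the modular and the Luxemburg norm, using the conversion \eqref{eq:ModularIneq3}, because no monotone relation between $|Du|^\p$ and $\|Du\|_{L^\p}$ is available. This is precisely the technical content of \cite[Corollary~6.6]{harjulehtohklm07}, which the theorem cites as its source.
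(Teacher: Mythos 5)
Your plan has a fatal flaw at the identification step $h=u$ a.e. The monotone limit $h$ of the Dirichlet solutions $h_k$ is precisely the $\p$-harmonic ``Poisson modification'' of $u$ in $D$ (the $\p$-harmonic function with boundary values $u$), and this is in general \emph{strictly smaller} than $u$ in the interior — it equals $u$ only when $u$ is itself $\p$-harmonic. Concretely, take $p\equiv 2$ and $u(x)=1-|x|^2$ in the unit ball $B$: this $u$ is bounded, superharmonic, and $u=0$ on $\partial B$, so your $h_k$ all vanish identically and $h\equiv 0\neq u$. The surrounding argument for $u\le h$ does not work either: knowing that a solution $w$ satisfies $w\le u$ on $\partial D'$ and that $h\le u$ on $\partial D'$ gives no comparison between $w$ and $h$, and the claim that ``$u\in W^{1,\p}_{\loc}$ is now established through $h$'' is circular — you have only shown $h\in W^{1,\p}_{\loc}$, and $u$ and $h$ have not been identified.

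The correct mechanism, which is what \cite[Corollary 6.6]{harjulehtohklm07} actually uses, is the \emph{obstacle problem}, not the Dirichlet problem. One approximates $u$ from below by continuous functions $\psi_j\nearrow u$ (possible by lower semicontinuity and local boundedness) and solves, in $D'\Subset D$, the obstacle problem with obstacle $\psi_j$. The solutions $v_j$ are continuous weak \emph{supersolutions} with $v_j\ge\psi_j$, and — crucially — $v_j$ is $\p$-harmonic on the open set $\{v_j>\psi_j\}$, so the comparison property of $u$ in Definition~\ref{def:div-harmonic} forces $v_j\le u$ there as well. Hence $\psi_j\le v_j\le u$ and $v_j\nearrow u$. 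Because the $v_j$ are uniformly bounded, a Caccioppoli estimate of exactly the kind you wrote gives uniform $W^{1,\p}_{\loc}$ bounds on $v_j$, so one may pass to the limit in the supersolution inequality~\eqref{eq:div-weak} and conclude $u\in W^{1,\p}_{\loc}(D)$ is a weak supersolution. Your overall strategy (approximate from below, extract uniform Sobolev bounds, pass to the limit) is the right shape, but the approximants must be obstacle-problem supersolutions that actually converge to $u$, not $\p$-harmonic functions, which converge to the Poisson modification instead.
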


With these results in hand, we may conclude that being a weak solution
is equivalent to being both $\p$-super-- and $\p$-subharmonic. Indeed,
any function with the latter properties is continuous and hence
locally bounded. Then Theorem \ref{thm:local-bnd} implies that the
function belongs to the right Sobolev space, and verifying the weak
formulation is easy. For the converse, it suffices to note that the
comparison principle for the continuous representative of a weak
solution follows from Theorem \ref{thm:lscrepresentative}.

Next we define viscosity solutions of the $\p$-Laplace equation. To
accomplish this, we need to evaluate the operator $\Delta_{\p} $ on
$C^2$ functions. Carrying out the differentiations, we see that
\begin{displaymath}
  \Delta_{\p}\vp(x)=\abs{D \vp}^{p(x)-2}\left(
    \Delta \vp+D p\cdot D \vp\log\abs{D \vp}
    +(p(x)-2)\Delta_{\infty} \vp(x)\right)
\end{displaymath}
for functions $\vp\in C^2(\dom)$, where
\begin{displaymath}
  \Delta_\infty\vp(x)= D^{2}\vp(x) \frac{D\vp(x)}{\abs{D\vp(x)}}\cdot
    \frac{D\vp(x)}{\abs{D\vp(x)}}
\end{displaymath}
is the normalized $\infty$-Laplacian.

In order to use the standard theory of viscosity solutions,
$\Delta_{\p}\vp(x)$ should be continuous in $x$, $D\vp $, and
$D^2\vp$. Since $Dp(x)$ is explicitly involved, it is natural to
assume that $\p\in C^1 $.  However, this still leaves the problem that
at the points were $\p<2$ and $D\vp(x)=0$, the expression
$\Delta_{\p}\vp(x)$ is not well defined. As in \cite{juutinenlm01}, it
will turn out that we can ignore the test functions whose gradient
vanishes at the point of touching.

\begin{definition}
  \label{def:div-viscosity}
  A function $u:\Om\to (-\infty,\infty]$ is a viscosity
  supersolution to \eqref{eq:div-equation}, if
  \begin{enumerate}
  \item $u$ is lower semicontinuous.
  \item $u$ is finite almost everywhere.
  \item \label{itm:visco-def-3} If $\vp \in C^2(\Om)$ is such that $u(x_0) = \vp(x_0)$, $u(x) > \vp(x)$
for $x \neq x_0$, and
    $D\vp(x_0)\neq0$, it
    holds that
    \[
    \begin{split}
      -\Delta_{\p} \vp(x_0)\geq 0.
    \end{split}
    \]
  \end{enumerate}

  A function $u:\Om\to [-\infty,\infty)$ is a viscosity subsolution to
  \eqref{eq:div-equation} if it is upper semicontinuous, finite a.e.,
  and \eqref{itm:visco-def-3} holds with the inequalities reversed.

  Finally, a function is a viscosity solution if it is both a
  viscosity super-- and subsolution.
\end{definition}

We often refer to the third condition above by saying that $\vp$
touches $u$ at $x_0$ from below.
The definition is symmetric in the
same way as before: $u$ is a viscosity subsolution if and only if $-u$
is a viscosity supersolution. Observe that nothing is required from
$u$ at the points in which it is not finite.

One might wonder if omitting entirely the test functions whose
gradient vanishes at the point of touching could allow for ``false''
viscosity solutions for the equation. Our results ensure that this is
not the case. Indeed, we show that the requirements in Definition
\ref{def:div-viscosity} are stringent enough for a comparison
principle to hold between viscosity sub-- and supersolutions, and,
moreover, that the definition is equivalent with the definition of a
weak solution. Finally, we want to emphasize that Definition
\ref{def:div-viscosity} is tailor-made for the equation $-\Delta_\p
v(x)=0$, and it does \emph{not} work as such for example in the case
of a non-homogeneous $\p$-Laplace equation $-\Delta_\p v(x)=f(x)$.

\section{Equivalence of weak solutions and viscosity
  solutions}

\label{main_results}

We turn next to the equivalence between weak and viscosity solutions
to \eqref{eq:div-equation}. This follows from the fact that viscosity
supersolutions and $\p$-superharmonic functions are the same class of
functions.  This is our main result.

\begin{theorem}\label{thm:equiv-supers}
  A function $v$ is a viscosity supersolution to \eqref{eq:div-equation} if and only if it is
  $\p$-superharmonic.
\end{theorem}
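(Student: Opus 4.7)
I would prove the two implications separately.

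For the direction \emph{$\p$-superharmonic $\Rightarrow$ viscosity supersolution}, suppose $v$ is $\p$-superharmonic and assume for contradiction that some $\vp\in C^{2}(\Om)$ touches $v$ strictly from below at $x_{0}$ with $D\vp(x_{0})\neq 0$ but $-\Delta_{\p}\vp(x_{0})<0$. By continuity of $\Delta_{\p}\vp$ on the open set $\{D\vp\neq 0\}$, $\vp$ is a classical, hence weak, subsolution in a small ball $B=B(x_{0},r)\Subset\Om$ on which $D\vp$ remains nonzero. The lower semicontinuity of $v$ and the strict touching give $m:=\min_{\partial B}(v-\vp)>0$, and since the equation is invariant under vertical translations $u\mapsto u+\mathrm{const}$, $\vp+m$ is still a weak subsolution. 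Let $h\in C(\ol B)$ denote the unique continuous weak solution of \eqref{eq:div-equation} in $B$ with $h=\vp+m$ on $\partial B$. The weak comparison principle gives $\vp+m\le h$ in $B$, and Definition \ref{def:div-harmonic}\eqref{itm:superharm-comp} applied to $v$ and $h$ (using $v\ge\vp+m=h$ on $\partial B$) gives $v\ge h$ in $B$. Evaluating at $x_{0}$ forces $v(x_{0})=\vp(x_{0})<\vp(x_{0})+m\le h(x_{0})\le v(x_{0})$, the desired contradiction.

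For the reverse direction, items (1) and (2) of Definition \ref{def:div-harmonic} are already built into the viscosity definition, so only the comparison principle must be verified. Fix $D\Subset\Om$ and a continuous weak solution $h$ of \eqref{eq:div-equation} in $\ol D$ with $v\ge h$ on $\partial D$. Following the route announced in the introduction, I would employ a two-step approximation: for each $\eps>0$, let $h_{\eps}\in W^{1,\p}(D)$ solve the inhomogeneous equation \eqref{temp1} in $D$ with $h_{\eps}-h\in W^{1,\p}_{0}(D)$. \textbf{Step A} is an energy/Minty-type argument on the functional underlying the $\p$-Laplacian showing that $h_{\eps}\to h$ locally uniformly in $D$ as $\eps\to 0^{+}$. \textbf{Step B} is the viscosity-versus-weak comparison asserting $v\ge h_{\eps}$ in $D$ as soon as $v\ge h_{\eps}$ on $\partial D$. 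Combining the two yields $v\ge h$ in $D$, which is exactly the missing condition \eqref{itm:superharm-comp}.

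Step B is the heart of the argument and the place I expect all the difficulty. Its purpose is that $h_{\eps}$ satisfies $-\Delta_{\p}h_{\eps}=-\eps<0$, i.e.\ it is a \emph{strict} subsolution of \eqref{eq:div-equation}; by the $C^{1,\alpha}$ regularity theory for weak solutions of the inhomogeneous $\p$-Laplacian, $h_{\eps}$ is also a viscosity solution of the perturbed PDE. Assuming that $v-h_{\eps}$ attains a negative minimum in $D$, one doubles the variables via $\Phi_{\sigma}(x,y)=v(x)-h_{\eps}(y)-\tfrac{1}{2\sigma}\abs{x-y}^{2}$ and invokes the Crandall--Ishii--Lions maximum principle for semicontinuous functions. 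This supplies near-minimizers $x_{\sigma},y_{\sigma}\to\hat x\in D$ with common ``gradient'' $\xi_{\sigma}=(x_{\sigma}-y_{\sigma})/\sigma$ and symmetric matrices $X_{\sigma}\le Y_{\sigma}+o(1)$. Subtracting the two resulting viscosity inequalities, expanded as in \eqref{intro:expanded}, should leave pure error terms on one side and the strict deficit $\eps$ on the other, a contradiction as $\sigma\to 0$.

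The obstacles, all absent from the constant-$p$ proof in \cite{juutinenlm01}, are: (a) the explicit $x$-dependence of $\abs{\xi_{\sigma}}^{\p-2}$ and $(\p-2)\Delta_{\infty}$, which forces careful control of the differences $\p(x_{\sigma})-\p(y_{\sigma})$; (b) the genuinely singular lower order term $D\p\cdot\xi_{\sigma}\log\abs{\xi_{\sigma}}$ from \eqref{intro:expanded}, which blows up as $\xi_{\sigma}\to 0$ and requires splitting into small- and large-gradient regimes, with the borderline $\p=2$ possibly crossed inside $D$; and (c) extracting a useful scalar inequality from $X_{\sigma}\le Y_{\sigma}$ after premultiplication by $\abs{\xi_{\sigma}}^{\p-2}$, which uses the Lipschitz continuity of $A\mapsto A^{1/2}$ on positive semidefinite matrices, as flagged in the introduction. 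The strict deficit $\eps>0$ manufactured in Step A is precisely what absorbs these errors, which is why Step B cannot be bypassed by comparing $v$ with $h$ directly at $\eps=0$.
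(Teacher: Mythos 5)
Your overall architecture matches the paper's: the forward implication via touching from below and the weak comparison principle; the reverse implication reduced to a strict comparison between a viscosity (super)solution and a locally Lipschitz weak solution of the perturbed equation $-\Delta_\p h_\eps=-\eps$, proved by doubling variables and the maximum principle for semicontinuous functions, using the Lipschitz continuity of $A\mapsto A^{1/2}$ and careful control of the $\log$ and $p(x_j)-p(y_j)$ error terms. Your version of the forward direction is in fact written slightly more carefully than the paper's terse statement.

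There is, however, a genuine gap in Step~B: you use the standard quadratic penalization $\Psi_\sigma(x,y)=\tfrac1{2\sigma}\abs{x-y}^2$, which fails whenever $p(\cdot)<2$ somewhere in $D$. The paper instead takes $\Psi_j(x,y)=\tfrac jq\abs{x-y}^q$ with $q>\max\{2,\,p^-/(p^--1)\}$. The role of the larger exponent is to establish $x_j\neq y_j$ (so that $\eta_j=D_x\Psi_j(x_j,y_j)\neq 0$ and the viscosity inequalities apply at nonsingular points): since $y\mapsto -\Psi_j(x_j,y)+\text{const}$ touches $h_\eps$ from below at $y_j$, the paper's Lemma~\ref{lem:X.X} forces $\limsup_{y\to y_j}(-\Delta_\p\vp_j(y))\geq\eps$, whereas a direct computation gives $-\Delta_\p\vp_j(y)\sim j^{p(y)-1}\abs{x_j-y}^{q(p(y)-1)-p(y)}[\dots]$, and when $x_j=y_j$ this limsup is $0$ only if $q(p(y_j)-1)-p(y_j)>0$, i.e.\ $q>p(y_j)/(p(y_j)-1)$. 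With $q=2$ the exponent is $p(y_j)-2<0$ in the singular range, the limsup becomes $+\infty$, and no contradiction is produced. You also do not use the local Lipschitz regularity of $h_\eps$, which is what yields the improved rate $j\abs{x_j-y_j}^{q-1+\delta}\to 0$ needed to absorb the $\log\abs{\eta_j}$ factors; ``splitting into small- and large-gradient regimes'' is not the mechanism here. Finally, you take $h$ directly as Dirichlet data for $h_\eps$ in $D$, but $h$ is only guaranteed to be in $W^{1,\p}_{\mathrm{loc}}(D)$; the paper first chooses, via lower semicontinuity of $v$, a smooth $D'\Subset D$ with $h\leq v+\delta$ on $D\setminus D'$, solves for $h_\eps$ in $D'$, uses smoothness of $\partial D'$ to get $v+\delta\geq h_\eps$ on $\partial D'$, and then sends $\eps\to 0$ followed by $\delta\to 0$. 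These three points are precisely where the variable-exponent argument departs from the constant-$p$ case, and the first one is fatal as written.
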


As an immediate corollary we have

\begin{corollary}\label{thm:equiv-sols}
  A function $u$ is a weak solution of \eqref{eq:div-equation} if and only if it is a viscosity
  solution of \eqref{eq:div-equation}.
\end{corollary}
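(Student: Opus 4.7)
The plan is to derive this corollary directly from Theorem \ref{thm:equiv-supers} together with the symmetry of both the weak and the viscosity formulations under $u\mapsto -u$, and the observation already made in the paper that being a weak solution is the same as being both $\p$-super- and $\p$-subharmonic.

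First I would note that Definition \ref{def:div-viscosity} is explicitly symmetric: $u$ is a viscosity subsolution if and only if $-u$ is a viscosity supersolution. Definition \ref{def:div-harmonic} shares this symmetry, and equation \eqref{eq:div-equation} itself is invariant under $u\mapsto -u$ because $|Du|^{\p-2}Du$ is odd in $Du$. Hence, applying Theorem \ref{thm:equiv-supers} to $-u$, one obtains at once that $u$ is a viscosity subsolution if and only if $u$ is $\p$-subharmonic.

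For the forward direction of the corollary, let $u$ be a weak solution. By the regularity theory cited after Definition \ref{def:div-weak}, $u$ has a locally H\"older continuous representative, which we still denote by $u$. Applying Theorem \ref{thm:lscrepresentative} to both $u$ and $-u$ (which is also a weak solution by the symmetry of the equation), we see that $u$ is simultaneously $\p$-superharmonic and $\p$-subharmonic. By Theorem \ref{thm:equiv-supers} and the symmetric statement derived above, $u$ is then both a viscosity super- and subsolution, i.e.\ a viscosity solution.

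Conversely, suppose $u$ is a viscosity solution. Then $u$ is both a viscosity supersolution and a viscosity subsolution, hence by Theorem \ref{thm:equiv-supers} and its symmetric counterpart, $u$ is both $\p$-super- and $\p$-subharmonic. This forces $u$ to be continuous (being simultaneously lower and upper semicontinuous), and in particular locally bounded. Theorem \ref{thm:local-bnd} then provides that $u\in W^{1,\p}_{\loc}(\Om)$ and is a weak supersolution; applying the same theorem to $-u$ yields that $u$ is also a weak subsolution. Thus $u$ is a weak solution. The only minor subtlety, essentially already flagged in the paragraph following Theorem \ref{thm:local-bnd}, is the bookkeeping between pointwise-defined objects (viscosity solutions, $\p$-superharmonic functions) and Sobolev equivalence classes (weak solutions); this is resolved by always working with the continuous representative, which is well-defined here since solutions are continuous. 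There is no essential obstacle, as the deep content of the equivalence is already contained in Theorem \ref{thm:equiv-supers}.
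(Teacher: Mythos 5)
Your proof is correct and is essentially the same argument the paper intends: the paper states the corollary as "immediate" because Theorem \ref{thm:equiv-supers}, its symmetric counterpart for subsolutions under $u\mapsto -u$, and the equivalence between weak solutions and functions that are both $\p$-super- and $\p$-subharmonic (established after Theorems \ref{thm:lscrepresentative} and \ref{thm:local-bnd}) chain together exactly as you describe. Your careful handling of the continuous representative is a reasonable and correct way to spell out the bookkeeping the paper leaves implicit.
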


Let us now start with the {\em proof of Theorem \ref{thm:equiv-supers}}.
Proving the  fact that a $\p$-superharmonic function is a viscosity supersolution is straightforward, cf.\ for example \cite{manfrediru09}:
Suppose first  that $v$ is $\p$-superharmonic. To see that $v$ is
a viscosity supersolution, assuming the opposite we find a function
$\varphi\in C^2(\Omega)$ such that $v(x_0)=\varphi(x_0)$,
$v(x)>\varphi(x)$ for all $x\not=x_0$, $D\varphi(x_0)\ne 0$, and
\begin{displaymath}
  -\Delta_{\p}\varphi(x_0)<0.
\end{displaymath}
By continuity, there is a radius $r$ such that $D\varphi(x)\ne 0$ and
\begin{displaymath}
  -\Delta_{\p}\varphi(x)<0
\end{displaymath}
for all $x\in B(x_0,r) $. Set
\[
\begin{split}
  m=\inf_{\abs{x-x_0}=r}(v(x)-\varphi(x))>0,
\end{split}
\]
and
\[
\begin{split}
  \widetilde{\varphi}=\varphi+m.
\end{split}
\]
Then $\widetilde{\varphi}$ is a weak subsolution in $B(x_0,r)$, and
$\widetilde{\varphi}\leq v$ on $\partial B(x_0,r)$. Thus
$\widetilde{\varphi}\leq v $ in $B(x_0,r)$ by the comparison principle
for weak sub-- and supersolutions, Lemma \ref{lem:comparison} below,
but
\begin{displaymath}
  \widetilde{\varphi}(x_0)=\varphi(x_0)+m>v(x_0),
\end{displaymath}
which is a contradiction.

The proof of the reverse implication is much more involved.  Let us
suppose that $v$ is a viscosity supersolution. In order to prove that
$v$ is $\p$-superharmonic, we need to show that $v$ obeys the
comparison principle with respect to weak solutions of
\eqref{eq:div-equation}. To this end, let $D\Subset\Om$ and let $h\in
C(\overline D)$ be a weak solution of \eqref{eq:div-equation} such
that $v\ge h$ on $\partial D$. Owing to the lower semicontinuity of $v$,
for every $\delta>0$ there is a smooth
domain $D^\prime\Subset D$ such that $h\le v+\delta$ in $D\setminus D^\prime$. The reason for passing to $D'$ is that we aim to use $h$ as  boundary values and therefore it should belong to the global Sobolev space instead of $W^{1,\p}_\textrm{loc}(D)$.

For $\eps>0$, let $h_{\eps}$ be the
unique weak solution to
\begin{displaymath}
  -\Delta_{\p}h_{\eps}=-\eps, \quad \eps >0
\end{displaymath}
such that $h_{\eps}-h\in W^{1,\p}_0(D^\prime)$.
Then $h_{\eps}$ is locally
Lipschitz in $D^\prime$, see
  \cite{AcerbiMingione, cosciam99}, $v+\delta\geq h_\eps$ on  $\partial D^\prime$ because of the smoothness of $D^\prime$, and it follows from Lemma \ref{lem:conv} below that
$h_{\eps}\to h$ locally uniformly in $D^\prime$ as $\eps\to 0$. Hence, in order
to prove that $v\ge h$ in $D$,  it suffices to prove
that $v+\delta \ge h_{\eps}$ in $D^\prime$ and then let first $\eps\to 0$ and
then $\delta\to 0$. As $v+\delta$ is also a viscosity supersolution of \eqref{eq:div-equation},
the proof of Theorem \ref{thm:equiv-supers} thus reduces to

\begin{proposition}\label{prop:diver_strictcomp}
Let $D^\prime\Subset \Om$, and suppose that $v$ is a viscosity
  supersolution to the $\p$-Laplace equation in $D^\prime$, and let
  $\eps>0$. Assume further that $h_\eps$ is a locally Lipschitz
  continuous weak solution of
  \begin{equation}
  \label{eq:eps-equation2}
  \begin{split}
  -\Delta_{\p} h_\eps=-\eps
  \end{split}
  \end{equation}
   in $D^\prime$ such that
  \begin{displaymath}
    v\geq h_\eps \quad\text{on}\quad  \partial D^\prime.
  \end{displaymath}
  Then
  \begin{displaymath}
      v\geq h_\eps \quad \text{in}\quad  D^\prime.
  \end{displaymath}

  A similar statement holds for viscosity subsolutions $u$, and
  locally Lipschitz continuous weak solutions $\widetilde{h}_\eps$ of
  \begin{equation}
  \label{eq:eps-equation1}
  \begin{split}
  -\Delta_{\p} \widetilde{h}_\eps=\eps.
  \end{split}
  \end{equation}
  In other words, if
  \begin{displaymath}
    u\leq \widetilde{h}_\eps \quad\text{on}\quad  \partial D^\prime,
  \end{displaymath}
  then
  \begin{displaymath}
    u\le \widetilde{h}_\eps \quad \text{in}\quad  D^\prime.
  \end{displaymath}
\end{proposition}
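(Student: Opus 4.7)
My plan is to argue by contradiction using the standard doubling-of-variables technique adapted to the variable-exponent setting. Assume that
\[
M := \sup_{\overline{D'}}(h_\eps - v) > 0.
\]
Since $v$ is lower semicontinuous, $h_\eps$ is continuous on $\overline{D'}$, and $v \ge h_\eps$ on $\partial D'$, the upper semicontinuous function $h_\eps - v$ attains $M$ at some interior point $\hat x \in D'$. I then introduce the penalized functional
\[
\Phi_j(x,y) = h_\eps(x) - v(y) - \frac{j}{2}\abs{x-y}^2
\]
on $\overline{D'}\times\overline{D'}$ and let $(x_j, y_j)$ be a maximum point. By the usual doubling lemma, $x_j, y_j \to \hat x$, $j\abs{x_j-y_j}^2 \to 0$, and $h_\eps(x_j) - v(y_j) \to M$. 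Setting $\eta_j := j(x_j - y_j)$, the local Lipschitz continuity of $h_\eps$ forces $\abs{\eta_j} \le L$ uniformly in $j$, where $L$ is the Lipschitz constant of $h_\eps$ in a neighborhood of $\hat x$.

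Next I would invoke the Crandall--Ishii--Jensen maximum principle for semicontinuous functions. For each large $j$, it produces symmetric matrices $X_j, Y_j$ with
\[
(\eta_j, X_j) \in \overline{J}^{2,+} h_\eps(x_j), \qquad (\eta_j, Y_j) \in \overline{J}^{2,-} v(y_j),
\]
together with the block inequality
\[
-CjI \le \begin{pmatrix} X_j & 0 \\ 0 & -Y_j \end{pmatrix} \le 3j \begin{pmatrix} I & -I \\ -I & I \end{pmatrix}.
\]
In particular, $X_j \le Y_j$ and $\norm{X_j}, \norm{Y_j} \le Cj$. Before using these jets, I would first verify that $h_\eps$ is itself a viscosity subsolution of $-\Delta_\p h = -\eps$; this is a minor adaptation of the $\p$-superharmonic-implies-viscosity direction already proved at the start of the section, using the weak comparison principle for the nonhomogeneous equation.

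Assuming $\eta_j \ne 0$, the viscosity inequalities for $h_\eps$ at $x_j$ and for $v$ at $y_j$ then read, after dividing by the respective power of $\abs{\eta_j}$,
\begin{align*}
\tr X_j + (p(x_j)-2)\bigl\langle X_j\hat\eta_j,\hat\eta_j\bigr\rangle + Dp(x_j)\cdot \eta_j \log\abs{\eta_j} &\ge \eps\, \abs{\eta_j}^{2-p(x_j)},\\
\tr Y_j + (p(y_j)-2)\bigl\langle Y_j\hat\eta_j,\hat\eta_j\bigr\rangle + Dp(y_j)\cdot \eta_j \log\abs{\eta_j} &\le 0,
\end{align*}
with $\hat\eta_j = \eta_j/\abs{\eta_j}$. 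Subtracting and regrouping, the leading matrix combination $\tr(X_j-Y_j) + (p(x_j)-2)\langle(X_j-Y_j)\hat\eta_j,\hat\eta_j\rangle$ is non-positive, because $X_j \le Y_j$ and the symbol $I + (p(x_j)-2)\hat\eta_j\otimes\hat\eta_j$ is positive definite with eigenvalues $1$ and $p(x_j)-1 > 0$. The mismatch terms $(p(x_j)-p(y_j))\langle Y_j\hat\eta_j,\hat\eta_j\rangle$ and $(Dp(x_j)-Dp(y_j))\cdot \eta_j\log\abs{\eta_j}$ should then be absorbed using the $C^1$ regularity of $p$, the bounds $\norm{X_j},\norm{Y_j} \le Cj$, the relation $j\abs{x_j-y_j}^2 \to 0$, and the uniform bound $\abs{\eta_j} \le L$. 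If all residual contributions can be driven to zero along a subsequence, the left-hand side becomes non-positive in the limit, contradicting the strictly positive lower bound on $\eps\abs{\eta_j}^{2-p(x_j)}$.

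The main obstacles I anticipate are three-fold. First, the degenerate case $\eta_j = 0$ occurring infinitely often in the singular regime $p(\hat x) < 2$, where no viscosity inequality is available at $y_j$; this must be ruled out by a separate perturbation argument, as in \cite{juutinenlm01}. Second, the fact that the $\p$-Laplacian is singular where $p<2$ and degenerate where $p\ge 2$, so both regimes may coexist in $D'$ and must be fitted together; the uniform bound $\abs{\eta_j}\le L$ coming from the Lipschitz regularity of $h_\eps$ is precisely what keeps $\eps\abs{\eta_j}^{2-p(x_j)}$ bounded away from zero in the singular case. Third, and most delicate, the cross terms produced by $p(x_j)\ne p(y_j)$ are not automatically negligible against $\norm{X_j},\norm{Y_j}=O(j)$, and here a sharper use of the Ishii block inequality is needed, exploiting the local Lipschitz continuity of the matrix square root to compare $\langle X_j\hat\eta_j,\hat\eta_j\rangle$ with $\langle Y_j\hat\eta_j,\hat\eta_j\rangle$ up to quantities controlled by $j\abs{x_j-y_j}^2 \to 0$. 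Once these obstacles are overcome, the desired contradiction follows; the analogous statement for viscosity subsolutions $u$ and weak solutions $\widetilde h_\eps$ of $-\Delta_\p\widetilde h_\eps = \eps$ follows by the symmetric substitution $u \mapsto -u$, $\widetilde h_\eps \mapsto -\widetilde h_\eps$.
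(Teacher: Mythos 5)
Your skeleton follows the paper's strategy (doubling of variables, Crandall--Ishii maximum principle, Lipschitz continuity of the matrix square root), but there are two genuine gaps in the execution that are not cosmetic.

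First, the penalization $\Psi_j(x,y)=\tfrac{j}{2}|x-y|^2$ is too weak. The paper uses $\Psi_j(x,y)=\tfrac{j}{q}|x-y|^q$ with $q>\max\{2,\,p^-/(p^--1)\}$, and this choice is not incidental: it is what makes it possible to rule out $x_j=y_j$. One checks that the function $\vp_j(y)=-\Psi_j(x_j,y)+\text{const.}$ touches the weak solution of the $-\eps$ (resp.\ $+\eps$) equation from below (resp.\ above), and Lemma~\ref{lem:X.X} then forces $\limsup_{y\to y_j}(-\Delta_{\p}\vp_j(y))\geq\eps$. A direct computation gives $\Delta_{\p}\vp_j(y)\sim |x_j-y|^{(q-1)(p(y)-2)+q-2}$, so if $x_j=y_j$ the $\limsup$ is zero \emph{provided} the exponent $(q-1)(p(y)-2)+q-2=q(p(y)-1)-p(y)$ is strictly positive, which requires $q>p(y)/(p(y)-1)$. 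With $q=2$ this exponent is $p(y)-2$, which is negative when $p(y_j)<2$, so the $\limsup$ need not vanish and no contradiction is obtained. You gesture at ``a separate perturbation argument, as in \cite{juutinenlm01}'' to handle this, but the fix used there (and here) is precisely taking $q$ large enough; there is no independent perturbation trick that rescues $q=2$.

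Second, the claim that the uniform bound $|\eta_j|\le L$ ``keeps $\eps|\eta_j|^{2-p(x_j)}$ bounded away from zero in the singular case'' is backwards. When $p(x_j)<2$ the exponent $2-p(x_j)$ is positive, so an upper bound on $|\eta_j|$ is useless: if $|\eta_j|\to 0$ then $|\eta_j|^{2-p(x_j)}\to 0$ and the contradiction evaporates. The upper bound helps only in the degenerate regime $p\ge 2$. The paper sidesteps this by \emph{not} dividing the two jet inequalities by the respective powers $|\eta_j|^{p(x_j)-2}$, $|\eta_j|^{p(y_j)-2}$. Instead it keeps the raw inequality $0<\eps\le B(x_j,\eta_j)-B(y_j,\eta_j)+\operatorname{trace}(A(x_j,\eta_j)X_j)-\operatorname{trace}(A(y_j,\eta_j)Y_j)$, writes $\operatorname{trace}(AX)=\sum_k X A_k^{1/2}\cdot A_k^{1/2}$, applies the sharp block inequality $X_j\xi\cdot\xi-Y_j\zeta\cdot\zeta\le Cj|z_j|^{q-2}|\xi-\zeta|^2$ with $\xi=A_k^{1/2}(x_j)$, $\zeta=A_k^{1/2}(y_j)$, and then uses the Lipschitz continuity of $A\mapsto A^{1/2}$ to bound $\|A^{1/2}(x_j)-A^{1/2}(y_j)\|$ by $\|A(x_j,\eta_j)-A(y_j,\eta_j)\|$ divided by $\lambda_{\min}(A^{1/2}(x_j))+\lambda_{\min}(A^{1/2}(y_j))$. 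The potential blow-up of $|\eta_j|^{p-2}$ when $p<2$ then appears in both numerator and denominator with cancelling rates, and the improved decay $j|x_j-y_j|^{q-1+\delta}\to 0$ (which again uses $q>2$) closes the argument. Your normalized mismatch terms of size $|p(x_j)-p(y_j)|\cdot\|Y_j\|\lesssim j|x_j-y_j|$ are only known to be $O(1)$, not $o(1)$, so they cannot be absorbed without this sharper bookkeeping.
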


The proof for Proposition \ref{prop:diver_strictcomp} turns out to be
both long and technically complicated.  It requires three Lemmas for
weak solutions that we prove in Section \ref{sec:three_lemmas}
below. The proof itself is given in Section \ref{sec:comparison}.

%We close this section by showing that Proposition
%\ref{prop:diver_strictcomp} in fact implies the comparison principle
%for viscosity sub- and supersolutions of the $\p$-Laplace
%equation.

We close this section by proving a comparison principle for a viscosity subsolution $u$ and a supersolution $v$. Roughly, the idea is to fix a smooth boundary value function which lies between $u$ and $v+\delta, \delta>0,$ at the boundary of an approximating smooth domain and find the unique weak solution $h_\eps$ to
  \eqref{eq:eps-equation2}. Since $h_\eps$ is locally Lipschitz continuous,  we can use Proposition~\ref{prop:diver_strictcomp} for $h_\eps$ and $v$ to show that $v\geq h_\eps$. The argument for the solution $\tilde h_\eps$ to \eqref{eq:eps-equation1} and $u$ is analogous, and Lemma~\ref{lem:conv} completes the proof by  showing that $h_\eps, \tilde h_\eps$ converge to  $h$ as $\eps\to 0$,  where $h$ is the unique weak solution to \eqref{eq:div-equation} with the same boundary values. Details are given below.

\begin{theorem}\label{thm:the-honest-comparison-principle}
  Let $\Omega$ be a bounded domain.  Assume that $u$ is a viscosity
  subsolution, and $v$ a viscosity supersolution such that
  \begin{equation}\label{gen_bndry_cond}
    \limsup_{x\to z}u(x)\leq \liminf_{x\to z}v(z)
  \end{equation}
  for all $z\in\partial \Omega$, where both sides are not
  simultaneously $-\infty$ or $\infty$. Then
  \begin{displaymath}
    u\leq v \quad \text{in}\quad \Omega.
  \end{displaymath}
\end{theorem}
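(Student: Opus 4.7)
The strategy is the one sketched in the paragraph preceding the statement: for each $\delta>0$, squeeze $u$ and $v+\delta$ against a common weak solution of \eqref{eq:div-equation} by applying Proposition~\ref{prop:diver_strictcomp} on both sides, and then remove the regularization by letting $\eps\to 0$ via Lemma~\ref{lem:conv}. Fix $\delta>0$. Using \eqref{gen_bndry_cond} together with the upper semicontinuity of $u$ and lower semicontinuity of $v$, choose a smooth subdomain $\Omega'\Subset\Omega$ close enough to $\Omega$ that $u<v+\delta$ on $\overline{\Omega\setminus\Omega'}$. On the compact set $\partial\Omega'$, $u$ is then USC (hence bounded above), $v+\delta$ is LSC (hence bounded below), and $u<v+\delta$ pointwise; by a standard insertion theorem for semicontinuous functions combined with mollification, there is a $g\in C^\infty(\overline{\Omega'})$ with $u\le g\le v+\delta$ on $\partial\Omega'$. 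The hypothesis that the two sides of \eqref{gen_bndry_cond} are never simultaneously $-\infty$ or $+\infty$ is precisely what makes this separation possible.

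For each $\eps>0$, let $h_\eps,\tilde h_\eps\in W^{1,\p}(\Omega')$ be the unique weak solutions of
\[
 -\Delta_\p h_\eps=-\eps, \qquad -\Delta_\p \tilde h_\eps=\eps \quad \text{in } \Omega',
\]
with $h_\eps-g,\ \tilde h_\eps-g\in W^{1,\p}_0(\Omega')$; both exist by the direct method and are locally Lipschitz continuous in $\Omega'$ by the same regularity results cited in the reduction to Proposition~\ref{prop:diver_strictcomp}. Since adding a constant to a viscosity supersolution of \eqref{eq:div-equation} yields another viscosity supersolution, $v+\delta$ is a viscosity supersolution with $v+\delta\ge g = h_\eps$ on $\partial\Omega'$, and Proposition~\ref{prop:diver_strictcomp} therefore gives $v+\delta\ge h_\eps$ in $\Omega'$. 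Symmetrically, $u\le g = \tilde h_\eps$ on $\partial\Omega'$, so the subsolution half of Proposition~\ref{prop:diver_strictcomp} yields $u\le \tilde h_\eps$ in $\Omega'$.

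Let $h\in W^{1,\p}(\Omega')$ be the unique weak solution of \eqref{eq:div-equation} with $h-g\in W^{1,\p}_0(\Omega')$. By Lemma~\ref{lem:conv}, both $h_\eps$ and $\tilde h_\eps$ converge to $h$ locally uniformly in $\Omega'$ as $\eps\to 0$. Combining the two inequalities of the previous paragraph and passing to the limit gives $u\le h\le v+\delta$ in $\Omega'$, hence $u\le v+\delta$ in $\Omega'$. Exhausting $\Omega$ by such smooth subdomains extends this to all of $\Omega$, and letting $\delta\to 0$ finishes the argument. The main obstacle is the apparently soft first step: constructing the smooth intermediate boundary datum $g$ from the one-sided, possibly infinite boundary limits of $u$ and $v$, where one must handle carefully the points at which $u$ is $-\infty$ or $v$ is $+\infty$; everything past that reduction is a clean assembly of the previously established Proposition~\ref{prop:diver_strictcomp} and Lemma~\ref{lem:conv}.
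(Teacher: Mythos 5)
Your proposal is correct and follows essentially the same route as the paper: squeeze $u$ and $v+\delta$ against a smooth boundary datum on a smooth subdomain, apply Proposition~\ref{prop:diver_strictcomp} to the regularized solutions $h_\eps$, $\tilde h_\eps$, and pass to the limit via Lemma~\ref{lem:conv}. The only cosmetic difference is that you use the inserted smooth function $g$ directly as the Sobolev boundary datum for $h$, $h_\eps$, $\tilde h_\eps$, whereas the paper first solves for $h$ with boundary values $\varphi$ and then uses $h$ as the datum for $h_\eps$; since $h-\varphi\in W^{1,\p}_0$, these prescribe the same $W^{1,\p}_0$ class and the arguments coincide.
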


\begin{corollary}
  \label{cor:unique}
  Let $\Omega$ be a bounded domain, and $f:\partial\Omega\to \R$ be a
  continuous function. If $u$ and $v$ are viscosity solutions of
  \eqref{eq:div-equation} in $\Omega$ such that
  \begin{displaymath}
    \lim_{x\to x_0}u(x)=f(x_0)\quad\text{and}\quad \lim_{x\to x_0}v(x)=f(x_0)
  \end{displaymath}
  for all $x_0\in \partial\Omega$, then $u=v$.
\end{corollary}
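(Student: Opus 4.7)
The plan is to deduce Corollary \ref{cor:unique} directly from Theorem \ref{thm:the-honest-comparison-principle} by applying the comparison principle twice, with the roles of $u$ and $v$ interchanged. The key observation is that a viscosity solution is by definition simultaneously a viscosity subsolution and a viscosity supersolution, so either one of $u,v$ can play either role in the comparison theorem.

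First I would verify the boundary hypothesis \eqref{gen_bndry_cond}. For every $z\in\partial\Omega$ the hypothesis gives $\lim_{x\to z}u(x)=f(z)=\lim_{x\to z}v(x)$, hence
\[
\limsup_{x\to z}u(x)=f(z)=\liminf_{x\to z}v(x),
\]
so the required inequality holds (in fact with equality). Since $f$ is continuous on the compact set $\partial\Omega$, its values are finite, and therefore the degenerate case excluded in the theorem — both sides being simultaneously $\pm\infty$ — cannot occur. Theorem \ref{thm:the-honest-comparison-principle} applied with $u$ as viscosity subsolution and $v$ as viscosity supersolution then yields $u\le v$ in $\Omega$.

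Next I would repeat the argument with the roles swapped: now $v$ serves as the viscosity subsolution and $u$ as the viscosity supersolution, while the boundary hypothesis is symmetric in $u$ and $v$ and is therefore still satisfied. A second application of Theorem \ref{thm:the-honest-comparison-principle} gives $v\le u$ in $\Omega$. Combining the two inequalities yields $u=v$ in $\Omega$.

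There is no genuine obstacle in this proof: the only things to check are that the one-sided limit condition in \eqref{gen_bndry_cond} is met and that the finiteness caveat is respected, both of which follow immediately from the continuity of $f$ on $\partial\Omega$. All of the real work has already been done in establishing Theorem \ref{thm:the-honest-comparison-principle}.
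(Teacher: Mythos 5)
Your proof is correct and is precisely the argument the paper intends: the corollary is stated as an immediate consequence of Theorem \ref{thm:the-honest-comparison-principle}, obtained by applying the comparison principle twice with the roles of $u$ and $v$ interchanged, exactly as you do. The verification that \eqref{gen_bndry_cond} holds (with equality) and that the finiteness caveat is respected because $f$ is real-valued is the right level of care.
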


\begin{proof}[Proof of Theorem \ref{thm:the-honest-comparison-principle}]
  Owing to \eqref{gen_bndry_cond}, for any $\delta>0$ there is a
  smooth subdomain $D\Subset \Omega$ such that
  \begin{displaymath}
    u<v+\delta
  \end{displaymath}
  in $\Omega\setminus D$.  By semicontinuity, there is a smooth
  function $\varphi$ such that
  \begin{displaymath}
    u<\varphi<v+\delta \quad \text{on}\quad  \partial D.
  \end{displaymath}
  Let $h$ be the unique weak solution to \eqref{eq:div-equation} in
  $D$ with boundary values $\varphi$. Then
  \begin{displaymath}
    u< h<v+\delta
  \end{displaymath}
  on $\partial D$, and $h$ is locally Lipschitz continuous in $D$ by
  the local $C^{1,\alpha}$ regularity of $\p$-harmonic functions, see
  \cite{AcerbiMingione, cosciam99}.

  For $\eps>0$, let $h_\eps$ be the unique weak solution to
  \eqref{eq:eps-equation1} such that $h_\eps-h\in W^{1,\p}_0(D)$. Then
  $h_\eps$ is locally Lipschitz in $D$ and it follows from Proposition
  \ref{prop:diver_strictcomp} that $u\leq h_\eps$ in $D$. In view of
  Lemma \ref{lem:conv}, this shows that $u\leq h$ in $D$, and a
  symmetric argument, using equation \eqref{eq:eps-equation2},
gives $h \le v+\delta$ in $D$. Thus we have $u\le
  v+\delta$ in $D$, and since this inequality was already known to
  hold in $\Om\setminus D$, we finally have $u\le v+\delta$ in
  $\Om$. The claim now follows by letting $\delta\to 0$.
\end{proof}

\section{Three Lemmas for weak solutions}\label{sec:three_lemmas}

In this section, we prove three lemmas that are needed in the proofs
of Theorem~\ref{thm:equiv-supers} above, and of
Proposition \ref{prop:diver_strictcomp} in the next section.  The
following well-known vector inequalities will be used several times
below:
\begin{equation}
  \label{eq:mono_est}
  (\abs{\xi}^{q-2}\xi-\abs{\eta}^{q-2}\eta)\cdot(\xi-\eta)\ge
\begin{cases}
2^{2-q}\abs{\xi-\eta}^{q}&\quad\text{if $q\ge 2$,}\\
(q-1)\frac{\abs{\xi-\eta}^2}{(\abs{\xi}+\abs{\eta})^{2-q}}&\quad\text{if $1<q< 2$.}
\end{cases}
\end{equation}
In particular, we have
\begin{equation}
  \label{eq:monotonicity}
  (\abs{\xi}^{p(x)-2}\xi-\abs{\eta}^{p(x)-2}\eta)\cdot(\xi-\eta)>0
\end{equation}
for all $\xi,\eta\in \R^n$ such that $\xi \not= \eta$ and
$1<p(x)<\infty$.

We begin with the following form of the comparison principle. Note
that the second assumption holds if $u$ is a weak subsolution, and $v$
a weak supersolution. For this reason, the lemma is also the basis of
the proof of the $\p$-superharmonicity of weak supersolutions, Theorem
\ref{thm:lscrepresentative}.
\begin{lemma}\label{lem:comparison}
  Let $u$ and $v$ be functions in $W^{1,\p}(\Omega)$ such that
  $(u-v)_+\in W^{1,\p}_0(\Omega)$. If
  \begin{displaymath}
    \int_{\Omega}\abs{D u}^{p(x)-2}D u\cdot D \varphi\dif x\leq
    \int_{\Omega}\abs{D v}^{p(x)-2}D v\cdot D \varphi\dif x
  \end{displaymath}
  for all positive test functions $\varphi\in W^{1,\p}_0(\Omega)$, then $u\leq v$ almost
  everywhere in $\Omega $.
\end{lemma}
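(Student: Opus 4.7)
The plan is to prove Lemma~\ref{lem:comparison} by the standard comparison-via-test-function argument: use $(u-v)_+$ itself as a test function, exploit the strict monotonicity of the $p(x)$-Laplacian operator, and finally invoke the Poincar\'e inequality to transfer vanishing-of-gradient information to vanishing of the function.

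First I would note that $\varphi := (u-v)_+ \in W^{1,\p}_0(\Omega)$ is nonnegative and therefore admissible in the hypothesis, so that subtracting the two inequalities yields
\[
\int_{\Omega}\bigl(\abs{Du}^{p(x)-2}Du - \abs{Dv}^{p(x)-2}Dv\bigr)\cdot D(u-v)_+ \dif x \le 0.
\]
On the set $A := \{u > v\}$ we have $D(u-v)_+ = Du - Dv$, while $D(u-v)_+ = 0$ almost everywhere on $\Omega\setminus A$. Hence the integral reduces to an integral over $A$ of the expression
\[
\bigl(\abs{Du}^{p(x)-2}Du - \abs{Dv}^{p(x)-2}Dv\bigr)\cdot(Du-Dv),
\]
which by the pointwise monotonicity inequality \eqref{eq:monotonicity} is nonnegative, and strictly positive at every point of $A$ where $Du\ne Dv$.

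Combining the two signs forces the integrand to vanish a.e.\ on $A$, hence $Du = Dv$ a.e.\ on $A$, and therefore $D(u-v)_+ = 0$ a.e.\ in $\Omega$. Since $(u-v)_+ \in W^{1,\p}_0(\Omega)$, the variable exponent Sobolev--Poincar\'e inequality of Theorem~\ref{thm:poincare} gives
\[
\norm{(u-v)_+}_{L^{\p}(\Omega)} \le C\diam(\Omega)\,\norm{D(u-v)_+}_{L^{\p}(\Omega)} = 0,
\]
so $(u-v)_+ \equiv 0$ a.e., which is exactly $u \le v$ a.e.\ in $\Omega$.

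The only delicate point I anticipate is the justification that $D(u-v)_+ = 0$ almost everywhere on the set $\{u\le v\}$; this is the standard fact that Sobolev functions have vanishing weak gradient on level sets of their positive part, and it allows the splitting of the integral over $A$ above. Everything else is bookkeeping: admissibility of the test function follows from $(u-v)_+ \in W^{1,\p}_0(\Omega)$ being nonnegative, monotonicity is the cited vector inequality, and the final step is a direct application of the Poincar\'e inequality already stated in Theorem~\ref{thm:poincare}.
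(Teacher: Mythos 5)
Your proof is correct and follows essentially the same route as the paper's: test with $(u-v)_+$, invoke the monotonicity inequality \eqref{eq:monotonicity} to force $D(u-v)_+=0$ a.e., and conclude from the zero boundary values via Theorem~\ref{thm:poincare}. You simply spell out the restriction to $\{u>v\}$ and the Poincar\'e step more explicitly than the paper does.
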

\begin{proof}
  By the assumption and \eqref{eq:monotonicity}, we see that
  \[
  \begin{split}
    0\leq \int_\Om (\abs{D u}^{p(x)-2}D u-\abs{D v}^{p(x)-2}Dv)\cdot
    D(u-v)_+\dif x\leq 0.
  \end{split}
  \]
  Thus $D(u-v)_+=0$, and since $(u-v)_+$ has zero boundary values, the
  claim follows.
\end{proof}

\begin{lemma}\label{lem:conv}
  Let $u\in W^{1,\p}(\Omega)$ be a weak solution of
  \begin{displaymath}
    -\Delta_{\p}u=0
  \end{displaymath}
  in $\Omega$, and $u_\eps$ a weak solution of
  \begin{displaymath}
    -\Delta_{\p}u=\eps, \quad \eps>0,
  \end{displaymath}
  such that $u-u_\eps\in W^{1,p(x)}_0(\Omega)$. Then
  \[
  \begin{split}
  u_\eps\to u\quad \text{locally uniformly in}\quad  \Omega,
  \end{split}
  \]
  as $\eps \to 0$.
\end{lemma}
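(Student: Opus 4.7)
The plan is to use $\varphi := u - u_\eps \in W^{1,\p}_0(\Om)$ as a test function in both weak formulations, exploit the monotonicity inequality \eqref{eq:mono_est} to show that the modular of $D(u-u_\eps)$ tends to zero, and then promote the resulting $L^{\p}$ convergence to locally uniform convergence using the local H\"older regularity of weak solutions.

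\textbf{Energy identity and $L^{\p}$ convergence.} Testing both weak formulations with $\varphi = u - u_\eps$ and subtracting yields
\[
\int_\Om \bigl(\abs{Du}^{\p-2}Du - \abs{Du_\eps}^{\p-2}Du_\eps\bigr)\cdot D(u-u_\eps)\ud x = \eps\int_\Om (u_\eps - u)\ud x.
\]
By H\"older's inequality \eqref{eq:hoelder} and Theorem \ref{thm:poincare}, the right-hand side is bounded by $C\eps \|D(u-u_\eps)\|_{L^{\p}(\Om)}$. I would split $\Om = \Om_+ \cup \Om_-$ with $\Om_+ = \{\p \ge 2\}$ and $\Om_- = \{\p < 2\}$. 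On $\Om_+$ the first case of \eqref{eq:mono_est} gives immediately a modular control $\int_{\Om_+} \abs{D(u-u_\eps)}^{\p} \ud x \le C\eps \|D(u-u_\eps)\|_{L^{\p}(\Om)}$. On $\Om_-$ the lower bound is weighted by $(\abs{Du}+\abs{Du_\eps})^{\p-2}$, so one inserts a matching weight and applies variable-exponent H\"older with exponents $2/\p$ and $2/(2-\p)$; the remaining factor $\int_{\Om_-} (\abs{Du}+\abs{Du_\eps})^{\p} \ud x$ is uniformly bounded thanks to an a priori energy estimate for $u_\eps$, obtained by testing $u_\eps$ against $u-u_\eps$ and applying Young's and Poincar\'e's inequalities. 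Combining the two regimes and invoking \eqref{eq:ModularIneq3} yields $\|D(u-u_\eps)\|_{L^{\p}(\Om)} \to 0$, and Theorem \ref{thm:poincare} then gives $\|u-u_\eps\|_{L^{\p}(\Om)} \to 0$ as $\eps \to 0$.

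\textbf{Locally uniform convergence.} The regularity theory cited in the paper (\cite{AcerbiMingione, cosciam99, alkhutov97, FanZhaoQM}) supplies local H\"older estimates for weak solutions of $-\Delta_{\p}u_\eps = \eps$ whose constants depend only on $n$, $p(\cdot)$, a local $L^\infty$ bound on $u_\eps$, and an upper bound for $\eps$. A uniform local $L^\infty$ bound for the family $\{u_\eps\}_{0<\eps\le 1}$ follows from a standard De Giorgi--Moser truncation argument, using the uniform $L^{\p}_{\loc}$ bound coming from the previous step together with the local boundedness of $u$. Thus $\{u_\eps\}$ is locally equicontinuous and locally uniformly bounded. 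By Arzel\`a--Ascoli, every sequence $\eps_k \to 0$ has a subsequence converging locally uniformly to some continuous $w$; the $L^{\p}_{\loc}$ convergence forces $w = u$ a.e., hence $w = u$ everywhere by continuity, and the whole family therefore converges locally uniformly to $u$.

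\textbf{Main obstacle.} The delicate step is the energy estimate on the singular region $\Om_-$: the lower bound in \eqref{eq:mono_est} degenerates where the gradients are small, so extracting an honest modular bound on $\abs{D(u-u_\eps)}^{\p}$ --- as opposed to a weighted $L^2$ bound that is not directly comparable with $\|D(u-u_\eps)\|_{L^{\p}}$ --- requires a carefully tuned variable-exponent H\"older step and a uniform a priori bound on $\|Du_\eps\|_{L^{\p}}$. Reconciling the singular regime $\Om_-$ with the degenerate regime $\Om_+$ in a single modular estimate is precisely the kind of mixed-range difficulty highlighted in the introduction.
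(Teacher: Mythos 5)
Your energy argument matches the paper's: test both weak formulations with $u-u_\eps$, split $\Omega$ into $\{\p\geq 2\}$ and $\{\p<2\}$, use the variable-exponent H\"older inequality with exponents $2/\p$ and $2/(2-\p)$ on the singular part, and close with the a priori bound on $\norm{Du_\eps}_{L^\p(\Om)}$ obtained exactly as you describe (the paper even notes testing $u_\eps$'s formulation with $u-u_\eps$ as the alternative to its minimizer argument). The one genuine difference is in the upgrade from $W^{1,\p}$-convergence to locally uniform convergence: the paper first shows that $\eps\mapsto u_\eps$ is monotone by the comparison Lemma~\ref{lem:comparison}, so the whole family converges pointwise a.e., and then invokes uniform $C^{\alpha}_{\loc}$ estimates; you instead run Arzel\`a--Ascoli on the uniformly H\"older-equicontinuous family and identify the limit through the $L^\p$-convergence. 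Both routes are valid, but the monotonicity argument is a bit leaner than yours: the sandwich $u\leq u_\eps\leq u_1$ for $0<\eps\leq 1$ already supplies the uniform local $L^\infty$ bound, so the De Giorgi--Moser truncation step you sketched is unnecessary.
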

\begin{proof}
  We begin the proof by deriving a very rough estimate for
  $\abs{Du-Du_\eps}$ in $L^{p(x)}$.  To this end, since $u_\eps$
  minimizes the functional
  \[
  v\mapsto \int_\Om \left(\frac1\p\abs{Dv}^\p-\eps v\right)\ud x,
  \]
 using H\"older's inequality, Sobolev-Poincar\'e (Theorem
  \ref{thm:poincare}), and the modular inequalities
  \eqref{eq:ModularIneq3}, we have
  \[
  \begin{split}
    \int_{\Om} \abs{Du_\eps}^\p \ud x \le&\, C \int_\Om \left( \abs{Du}^\p+\eps\abs{u_\eps-u}\right)\ud x \\
    \le&\, C\left( \int_\Om \abs{Du}^\p\ud x+
      \eps\norm{1}_{p'(x)}\norm{u_\eps-u}_{L^\p(\Om)}\right)\\
    \leq&\, C\left(\int_\Om \abs{Du}^\p\ud x+\eps \norm{Du_\eps-Du}_{L^\p(\Om)}\right)\\
    \le&\, C \left( 1+\norm{Du}_{L^\p(\Om)}^{p^+}+\eps\left(\norm{Du}_{L^\p(\Om)}+\norm{Du_\eps}_{L^\p(\Om)}\right)\right).
  \end{split}
  \]
  Alternatively, we could start  by testing the weak formulation for $u_\eps$ with $u-u_\eps\in W^{1,p(x)}_0(\Om)$,
  use Young's inequality, and then continue in the same way as above.
  Using \eqref{eq:ModularIneq3} again, and absorbing one of the terms into the left gives
 \begin{equation}
 \label{eq:preli-rough}
 \begin{split}
  \norm{Du_\eps}_{L^\p(\Om)} \le  C \left( 1+\norm{Du}_{L^\p(\Om)}^{p^+}\right)^{1/p^-}
 \end{split}
 \end{equation}
  with a constant $C$ independent of $\eps$ for all $\eps>0$ small enough, and thus
  \begin{equation}\label{eq:rough}
    \norm{Du-Du_\eps}_{L^\p(\Om)} \le  C \left( 1+\norm{Du}_{L^\p(\Om)}^{p^+}\right).
  \end{equation}

  Next we use $u-u_\eps\in W^{1,p(x)}_0(\Omega)$ as a test-function in
  the weak formulations of $-\Delta_{\p}u=0$ and
  $-\Delta_{\p}u_\eps=\eps$, and subtract the resulting
  equations. This yields
  \begin{equation}\label{eq:eqdiff}
  \begin{split}
\int_{\Om}(\abs{D u}^{p(x)-2}Du-\abs{D  u_\eps}^{p(x)-2}Du_\eps)&\cdot(D u-Du_\eps)\ud x
\\
&=\eps \int_\Om \left(u_\eps-u\right)\ud x.
\end{split}
\end{equation}
The right hand side can be estimated as above:
\[
\eps \int_\Om \left(u_\eps-u\right)\ud x
    \leq C\eps \norm{Du_\eps-Du}_{L^\p(\Om)}.
\]
In order to obtain a suitable lower bound for the left hand side of \eqref{eq:eqdiff}, we use
the two inequalities in \eqref{eq:mono_est}, and therefore we need
to consider separately the subsets $\Om^-:=\{x\in\Om\colon 1<p(x)<2\}$, and
$\Om^+:=\{x\in\Om\colon p(x)\ge 2\}$.

Let us first concentrate on $\Om^-$. By using H\"older's inequality
\eqref{eq:hoelder}, and the modular inequalities
\eqref{eq:ModularIneq3}, we have
 \[
 \label{eq:p-less-2}
 \begin{split}
 \int_{\Om^-}\abs{Du-Du_\eps}^{p(x)} \ud x
  \leq&\, C \norm{\frac{\abs{Du_\eps-Du}^{p(x)}}
{(\abs{D u}+\abs{Du_\eps})^{\frac{p(x)}2(2-p(x))}}}_{L^{\frac2{p(x)}}(\Om^-)}\\
 &\times\norm{(\abs{D u}+\abs{Du_\eps})^{\frac{p(x)}2(2-p(x))}}_{L^{\frac2{2-p(x)}}(\Om^-)}\\
    \leq&\, C\max_{p\in \{{\hat p}^+,{\hat p}^-\}}\left(\int_{\Om^-}
\frac{\abs{Du_\eps-Du}^{2}}{(\abs{D u}+\abs{Du_\eps})^{2-p(x)}} \ud x\right)^{p/2}\\
&\times \left(1+\int_{\Om^-} (\abs{D u}+\abs{Du_\eps})^\p\ud x
\right)^{1/2},
\end{split}
 \]
where ${\hat p}^-=\inf_{\Om^-} p(x) $ and ${\hat p}^+=\sup_{\Om^-} p(x)$.
The vector inequality \eqref{eq:mono_est}, Young's inequality, and the fact that $1<\hat p^-$,  ${\hat p}^+\le 2$ imply
\[
\begin{split}
&\max_{p\in \{{\hat p}^+,{\hat p}^-\}}\left(\int_{\Om^-}
\frac{\abs{Du_\eps-Du}^{2}}{(\abs{D u}+\abs{Du_\eps})^{2-p(x)}} \ud x\right)^{p/2}\\
&\ \le \max_{p\in \{{\hat p}^+,{\hat p}^-\}}
C\left(\int_{\Om^-}(\abs{D u}^{p(x)-2}Du-\abs{D  u_\eps}^{p(x)-2}Du_\eps)\cdot(D u-Du_\eps)\ud x\right)^{p/2}\\
&\ \le C\left(\delta^{\frac2{2-{\hat p}^-}}+\delta^{-\frac2{{\hat p}^-}}\int_{\Om^-}
(\abs{D u}^{p(x)-2}Du-\abs{D  u_\eps}^{p(x)-2}Du_\eps)\cdot(D u-Du_\eps)\ud x\right)
\end{split}
\]
for any $0<\delta<1$, to be chosen later. Combining this with \eqref{eq:preli-rough}, which can be used to bound the term
$(1+\int_{\Om^-} (\abs{D u}+\abs{Du_\eps})^\p\ud x)$,
we obtain
\begin{equation}\label{eq:sing_part}
\begin{split}
&\int_{\Om^-}\abs{Du-Du_\eps}^{\p} \ud x
\le\, C\delta^{\frac2{2-{\hat p}^-}}\\
&\hspace{1 em}+C\delta^{-\frac2{{\hat p}^-}}\int_{\Om^-}
(\abs{D u}^{p(x)-2}Du-\abs{D  u_\eps}^{p(x)-2}Du_\eps)\cdot(D u-Du_\eps)\ud x
\end{split}
\end{equation}
for $0<\delta<1$ and for a constant $C$ depending on $u$ but independent of $\eps$ and $\delta$.

For $\Om^+=\{x\in\Om\colon p(x)\geq 2\}$, \eqref{eq:mono_est} gives
 \[
 %\label{eq:p-larger-2}
 \begin{split}
 \int_{\Om^+} &\abs{Du-Du_\eps}^{p(x)} \ud x \\
 &\leq C \int_{\Om^+}(\abs{D u}^{p(x)-2}Du-\abs{D  u_\eps}^{p(x)-2}Du_\eps)\cdot(D u-Du_\eps)\ud x,
 \end{split}
\]
and summing up this with \eqref{eq:sing_part} and using \eqref{eq:eqdiff}
yields
 \[
 \begin{split}
 \int_{\Om} &\abs{Du-Du_\eps}^{p(x)} \ud x \\
 &\leq C \left( \delta^{\frac2{2-{\hat p}^-}}+\delta^{-\frac2{{\hat p}^-}}\int_{\Om}
(\abs{D u}^{p(x)-2}Du-\abs{D  u_\eps}^{p(x)-2}Du_\eps)\cdot(D u-Du_\eps)\ud x\right)\\
&\le C \left( \delta^{\frac2{2-{\hat p}^-}}+\delta^{-\frac2{{\hat p}^-}} \eps\norm{Du-Du_\eps}_{L^{\p}(\Om)}\right).
 \end{split}
 \]
Now we choose $\delta=\eps^{\frac{(2-{\hat p}^-){\hat p}^-}4}$, and have
\[
\int_{\Om} \abs{Du-Du_\eps}^{p(x)} \ud x
\le C \eps^{{\hat p}^-/2}\Bigg( 1+\norm{Du-Du_\eps}_{L^{\p}(\Om)}\Bigg).
\]
Owing to \eqref{eq:rough} and the modular
inequalities, we obtain
\begin{displaymath}
  \norm{Du-Du_\eps}_{L^{\p}(\Om)}\to 0 \quad \text{as} \quad  \eps\to 0.
\end{displaymath}

It follows from this and another application of Poincar\'e's inequality
as well as inequalities \eqref{eq:ModularIneq3} that
\begin{equation}
  \label{eq:sobo-convergence}
  \begin{split}
   u_\eps\to u  \quad \text{in} \quad W^{1,\p}(\Omega)\quad \text{as}\quad
  \eps\to 0.
  \end{split}
  \end{equation}
  Then we choose $\eps_1$ and $\eps_2$ such that $\eps_1\geq \eps_2$
  and subtract the corresponding equations to get
  \[
  \begin{split}
   \int_\Omega (\abs{D u_{\eps_1}}^{p(x)-2}Du_{\eps_1}-\abs{D  u_{\eps_2}}^{p(x)-2}Du_{\eps_2})
    \cdot D\vp\dif x =(\eps_1-\eps_2)\int_\Omega \vp\dif x\geq 0
  \end{split}
  \]
  for positive $\vp$.  According to Lemma \ref{lem:comparison}, we
  have $u_{\eps_1}\geq u_{\eps_2}$ almost everywhere.  This together
  with \eqref{eq:sobo-convergence} implies that
   \[
  \begin{split}
  u_\eps\to u\quad \text{almost everywhere in}\quad  \Omega.
  \end{split}
  \]
  The claim about the locally uniform convergence follows from  $C^{\alpha}_{loc}$-estimates for $u_\eps$ which are
  uniform in $\varepsilon$ due to the results in \cite[Section 4]{FanZhaoQM}.
\end{proof}

We use the next lemma to deal with the singularity in the equation in
the region where $1<p(x)<2$.
\begin{lemma}\label{lem:X.X}
  Let $v_\eps\in W^{1,\p}(\Om)$ be a weak solution of
  \begin{equation}\label{eq:X.X}
    -\Delta_{\p}v=\eps.
  \end{equation}
  Suppose that $\vp\in C^2(\Om)$ is
 such that $v_\eps(x_0) = \vp(x_0)$, $v_\eps(x) > \vp(x)$
for $x \neq x_0$, and that either $x_0$
  is an isolated critical point of $\vp$, or $D\vp(x_0)\not=0$. Then
  \begin{displaymath}
    \limsup_{\substack{x\to x_0\\x\not=x_0}}(-\Delta_{\p}\vp(x))\geq \eps.
  \end{displaymath}
\end{lemma}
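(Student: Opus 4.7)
The plan is to argue by contradiction. Suppose, contrary to the conclusion, that $\limsup_{x\to x_0,\,x\neq x_0}(-\Delta_{\p}\vp(x))<\eps$. Then there exist $r>0$ and $\eta>0$ such that $-\Delta_{\p}\vp(x)\le \eps-\eta$ for every $x\in B(x_0,r)\setminus\{x_0\}$; under our hypothesis on $\vp$ we may take $r$ so small that $D\vp(x)\neq 0$ throughout this punctured ball, so the classical expression for $\Delta_{\p}\vp$ is meaningful there. Since $v_\eps$ is continuous (being a weak solution, hence locally H\"older/$C^{1,\alpha}$) and $v_\eps-\vp$ attains its strict minimum $0$ only at $x_0$, the number
\[
m:=\min_{\partial B(x_0,r)}(v_\eps-\vp)
\]
is strictly positive. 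Set $\widetilde{\vp}:=\vp+m$. Then $\widetilde{\vp}\le v_\eps$ on $\partial B(x_0,r)$, while $\widetilde{\vp}(x_0)=\vp(x_0)+m>v_\eps(x_0)$, so in order to reach a contradiction via Lemma~\ref{lem:comparison} it suffices to verify that $\widetilde{\vp}$ is a weak subsolution of $-\Delta_{\p}u=\eps-\eta$ in the whole ball $B(x_0,r)$; that is,
\[
\int_{B(x_0,r)}\abs{D\widetilde{\vp}}^{\p-2}D\widetilde{\vp}\cdot D\psi\ud x\le (\eps-\eta)\int_{B(x_0,r)}\psi\ud x
\]
for every nonnegative $\psi\in C_0^\infty(B(x_0,r))$.

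If $D\vp(x_0)\neq 0$, this is trivial: by continuity $D\vp$ stays away from zero on a neighborhood of $x_0$, $-\Delta_{\p}\widetilde{\vp}=-\Delta_{\p}\vp\le \eps-\eta$ pointwise on the full ball (possibly shrinking $r$), and a standard integration by parts yields the required weak inequality. The genuine difficulty is the second case, where $D\vp(x_0)=0$ with $x_0$ an isolated critical point and $\p<2$ is allowed at $x_0$; there the classical pointwise expression $-\Delta_{\p}\vp$ can blow up as $x\to x_0$, so we cannot just integrate the pointwise bound.

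To handle this, the plan is to apply Green's formula on the annulus $A_\rho:=B(x_0,r)\setminus\ol{B(x_0,\rho)}$ and let $\rho\to 0^+$. Since $\vp\in C^2$ and $D\vp(x_0)=0$, one has $\abs{D\vp(x)}\le C\abs{x-x_0}$, hence the \emph{flux} $\abs{D\vp}^{\p-2}D\vp$ has magnitude $\le C\abs{x-x_0}^{\p-1}$. Thus the boundary integral on $\partial B(x_0,\rho)$ is bounded by $C\rho^{p^--1}\rho^{n-1}\norm{\psi}_\infty$, which tends to $0$ as $\rho\to 0$ because $p^->1$. The remaining interior integral over $A_\rho$ involves the continuous, locally bounded integrand $(-\Delta_{\p}\vp)\psi$ (bounded above by $(\eps-\eta)\psi$ and with singularity only at $x_0$, which is a null set); dominated convergence, together with the fact that $\abs{D\vp}^{\p-2}D\vp$ is bounded near $x_0$, lets us pass to the limit and obtain the desired weak subsolution inequality.

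Once $\widetilde{\vp}$ is known to be a weak subsolution of $-\Delta_{\p}u=\eps-\eta<\eps$ on $B(x_0,r)$ with $\widetilde{\vp}\le v_\eps$ on $\partial B(x_0,r)$, Lemma~\ref{lem:comparison} (applied to $\widetilde{\vp}$ and $v_\eps$ with the test function convention adjusted for the $\eps$ right-hand sides) yields $\widetilde{\vp}\le v_\eps$ throughout $B(x_0,r)$, contradicting $\widetilde{\vp}(x_0)>v_\eps(x_0)$. The main obstacle, as indicated, is the removability of the isolated critical point $x_0$ in the weak formulation when $\p<2$; the success of the argument hinges on the simple but crucial observation that the flux $\abs{D\vp}^{\p-2}D\vp$ vanishes as $x\to x_0$ because $p^->1$, even though the Laplacian-like expression $-\Delta_{\p}\vp$ itself may not.
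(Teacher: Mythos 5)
Your proof is correct and follows essentially the same strategy as the paper's: argue by contradiction, apply the divergence theorem on the annulus $\rho<\abs{x-x_0}<r$, show that the inner boundary flux of $\abs{D\vp}^{\p-2}D\vp$ vanishes as $\rho\to 0$ (using $p^->1$), conclude that $\vp$ is a weak subsolution, and then contradict the comparison principle with $\widetilde{\vp}=\vp+m$. One wording caution: you should not invoke dominated convergence for the interior term $\int_{A_\rho}(-\Delta_\p\vp)\psi$ itself — when $\p<2$ near $x_0$ and $D\vp(x_0)=0$, the function $-\Delta_\p\vp$ need not be integrable on the full ball — what is actually used (as in the paper) is that the gradient integral $\int_{A_\rho}\abs{D\vp}^{\p-2}D\vp\cdot D\psi$ and the boundary term both converge as $\rho\to 0$, while the interior term satisfies the one-sided bound $-\int_{A_\rho}\psi\Delta_\p\vp\leq(\eps-\eta)\int_{B_r}\psi$, and this one-sided control is all that is needed to pass to the limit in the divergence-theorem identity.
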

\begin{proof}
  We may, of course, assume that $x_0=0$. We make a counterassumption,
  which yields a radius $r>0$ such that
  \begin{displaymath}
    D\vp(x)\not=0 \quad \text{and} \quad -\Delta_{\p}\vp(x)<\eps
  \end{displaymath}
  for $0<\abs{x}<r $.

  We aim at showing first that $\vp$ is a weak subsolution of
  \eqref{eq:X.X} in $B_r=B(0,r)$. Let $0<\rho<r$. For any positive
  $\eta\in C_0^\infty(B_r)$, we have
  \begin{align*}
    -\int_{\abs{x}=\rho}\eta\abs{D\vp}^{p(x)-2}D\vp
    \cdot\frac{x}{\rho}\dif S
    =& \int_{\rho<\abs{x}<r}\abs{D\vp}^{p(x)-2}D\vp\cdot D\eta\dif x\\
    & +\int_{\rho<\abs{x}<r}(\Delta_{\p}\vp)\eta\dif x
  \end{align*}
  by the divergence theorem. The left hand side tends to zero as
  $\rho\to 0$, since
  \begin{displaymath}
    \abs{\int_{\abs{x}=\rho}\eta\abs{D\vp}^{p(x)-2}D\vp
      \cdot\frac{x}{\rho}\dif S}\leq
    C\norm{\eta}_{\infty}\max\{\norm{D \vp}_\infty^{p^+-1},
    \norm{D \vp}_\infty^{p^--1}\}\rho^{n-1}.
  \end{displaymath}
  By the counterassumption,
  \begin{displaymath}
    \int_{\rho<\abs{x}<r}\eta\Delta_{\p}\vp\dif x\geq
    -\eps\int_{\rho<\abs{x}<r}\eta\dif x\geq -\eps\int_{B_r}\eta\dif
    x.
  \end{displaymath}
  Letting $\rho$ tend to zero, we see that
  \begin{displaymath}
    \int_{B_r}\abs{D\vp}^{p(x)-2}D\vp\cdot D\eta\dif x\leq
    \eps\int_{B_r}\eta\dif x,
  \end{displaymath}
  which means that $\vp$ is indeed a weak subsolution.

  Now a contradiction follows from the comparison principle in a
  similar fashion as in the first part of the proof of Theorem
  \ref{thm:equiv-supers}. Indeed, we have $m=\inf_{\partial
    B_r}(v_\eps-\vp)>0$. Then $\widetilde{\vp}=\vp+m$ is a weak
  subsolution such that $\widetilde{\vp}\leq v_\eps$ on $\partial
  B_r$, but $\widetilde{\vp}(0)>v_\eps(0)$.
\end{proof}

\section{The comparison principle}\label{sec:comparison}

As seen in Section \ref{main_results}, Proposition
\ref{prop:diver_strictcomp} is the core of the proof of the equivalence of weak and viscosity solutions.
To prepare for its proof, we write
$\Delta_{\p}\vp(x) $ in a more convenient form.  For a vector $\xi\ne
0$, $\xi\otimes \xi$ is the matrix with entries $\xi_i\xi_j$. Let
\[
A(x,\xi)=\abs{\xi}^{p(x)-2}\left(I+(p(x)-2)\frac{\xi}{\abs{\xi}}\otimes\frac{\xi}{\abs{\xi}}\right),
\]
\[
B(x,\xi)=\abs{\xi}^{p(x)-2}\log\abs{\xi}\xi\cdot D p(x),
\]
and
\begin{displaymath}
  F(x,\xi,X)=\tr(A(x,\xi)X)+B(x,\xi)
\end{displaymath}
for $x\in\Omega $, $\xi\in\R^n$, and $X$ a symmetric $n\times n$ matrix.
Then we may write
\begin{align*}
  \Delta_{\p}\vp(x)=&F(x,D\vp(x),D^2\vp(x))\\
  =&\tr(A(x,D\vp(x))D^2\vp(x))+B(x,D\vp(x))
\end{align*}
if $D\vp(x)\ne 0$.

We prove the claim about viscosity subsolutions in Proposition
\ref{prop:diver_strictcomp}, the case of supersolutions then following
by symmetry.  For the convenience of the reader, we repeat
the statement before proceeding with the proof.

\begin{proposition}\label{prop:diver_strictcomp_restated}
  Let $\dom$ be a bounded domain, and suppose that $u$ is a viscosity
  subsolution to the $\p$-Laplace equation, and $v$ is a locally
  Lipschitz continuous weak solution of
  \begin{equation}
  \label{eq:eps-equation}
  \begin{split}
  -\Delta_{\p} v=\eps,\quad \eps>0,
  \end{split}
  \end{equation}
   in $\dom$ such that
  \begin{displaymath}
    u\leq v \text{ on }  \partial\dom.
  \end{displaymath}
  Then
  \begin{displaymath}
      u\le v \text{ in } \dom.
  \end{displaymath}
\end{proposition}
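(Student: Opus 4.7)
I would argue by contradiction using the Ishii--Jensen maximum principle for semicontinuous functions. Assume $\sup_\Om(u-v) > 0$; then by upper semicontinuity of $u$, continuity of $v$, and the boundary ordering, the positive maximum $M$ of $u-v$ is attained at some interior point $\hat x$. Fix a small ball $D\Subset\Om$ around $\hat x$ on which $v$ is $L$-Lipschitz and double the variables by
\[
\Psi_j(x,y) := u(x) - v(y) - \tfrac{j}{2}|x-y|^2, \qquad (x,y)\in\ol D\times\ol D.
\]
Let $(x_j,y_j)$ be a maximizer. Standard penalization estimates yield $x_j,y_j\to\hat x$, $j|x_j-y_j|^2\to 0$, and $\Psi_j(x_j,y_j)\to M$. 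The crucial observation is that, since $y\mapsto \tfrac{j}{2}|x_j-y|^2$ touches $v$ from below at $y_j$ and $v$ is locally Lipschitz, the vector $\xi_j := j(x_j-y_j)$ is uniformly bounded: $|\xi_j|\le L$.

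Applying Ishii's lemma produces symmetric matrices $X_j, Y_j$ with $(\xi_j,X_j)\in \ol J^{2,+}u(x_j)$ and $(\xi_j,Y_j)\in \ol J^{2,-}v(y_j)$, satisfying
\[
\begin{pmatrix} X_j & 0 \\ 0 & -Y_j \end{pmatrix} \le 3j\begin{pmatrix} I & -I \\ -I & I \end{pmatrix},
\]
so $X_j\le Y_j$ and $\|X_j\|,\|Y_j\|=O(j)$. In the generic case $\xi_j\ne 0$, the viscosity subsolution property of $u$ gives $\tr(A(x_j,\xi_j)X_j) + B(x_j,\xi_j)\ge 0$. To exploit the weak formulation for $v$, one approximates the jet $(\xi_j,Y_j)$ by smooth functions with nonvanishing gradient touching $v$ from below near $y_j$ and applies Lemma \ref{lem:X.X}, obtaining $\tr(A(y_j,\xi_j)Y_j) + B(y_j,\xi_j) \le -\eps$.

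Subtracting and regrouping,
\[
\tr\bigl(A(x_j,\xi_j)(X_j-Y_j)\bigr) + \tr\bigl([A(x_j,\xi_j)-A(y_j,\xi_j)]Y_j\bigr) + [B(x_j,\xi_j)-B(y_j,\xi_j)] \ge \eps.
\]
The first trace is nonpositive because $A\ge 0$ and $X_j\le Y_j$, and should in fact be shown to be $o(1)$: writing $A(x,\xi) = S(x,\xi)^T S(x,\xi)$ with $S$ the positive square root of $A$, and applying the full Ishii inequality to pairs $(Se_i, Se_i)$, one arrives at a bound of order $j|x_j-y_j|^2\cdot \|DS\|^2 = o(1)$. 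The bound $|\xi_j|\le L$ is essential here to keep $S$ away from the singular locus $\xi = 0$, and the regimes $\{p\ge 2\}$ and $\{p<2\}$ have to be handled separately. The remaining terms are controlled using $|x_j-y_j|=O(1/j)$, the $C^1$-smoothness of $p$, and the boundedness of $|\xi_j|^{p(x)-1}\log|\xi_j|$ when $\xi_j$ stays away from $0$.

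I expect the main obstacle to be the second trace: the naive estimate $\|A(x_j,\xi_j)-A(y_j,\xi_j)\|\cdot\|Y_j\| = O(|x_j-y_j|)\cdot O(j) = O(1)$ is not $o(1)$, so one cannot conclude by brute force. The fix must exploit the Ishii matrix inequality more deeply, trading a power of $\|Y_j\|$ for a factor of $\sqrt{j|x_j-y_j|^2} = o(1)$ through the local Lipschitz continuity of the matrix square root of $A$, as flagged in the introduction. The degenerate case $\xi_j = 0$, which forces $x_j = y_j$, must be handled separately, either by perturbing the penalization so that the resulting test functions have nonvanishing gradient, or by a direct appeal to the isolated critical point clause of Lemma \ref{lem:X.X}. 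Once these issues are settled, the displayed inequality contradicts $\eps > 0$ in the limit $j\to\infty$, completing the proof.
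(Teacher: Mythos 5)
Your proposal follows the same overall strategy as the paper: contradiction, doubling of variables with a power penalty, the Ishii--Jensen maximum principle, writing the principal part via the matrix square root $A^{1/2}$ and exploiting its local Lipschitz continuity, and using the local Lipschitz continuity of $v$ to keep $\eta_j=D_x\Psi_j(x_j,y_j)$ bounded and push a factor $j|x_j-y_j|^2$ to zero. Your diagnosis that the naive bound $\norm{A(x_j)-A(y_j)}\cdot\norm{Y_j}=O(1)$ is insufficient, and that the cure is the sharp matrix inequality applied to $\xi=A^{1/2}(x_j)e_k$, $\zeta=A^{1/2}(y_j)e_k$, is exactly the mechanism in the paper. (Your write-up of ``Ishii applied to $(Se_i,Se_i)$'' is a slip: equal pairs only give nonpositivity of $\tr\bigl(A(x_j)(X_j-Y_j)\bigr)$; the bound involving $\operatorname{Lip}(A^{1/2})$ comes from taking the two columns at \emph{different} base points, and it then controls the combination $\tr\bigl(A(x_j)X_j\bigr)-\tr\bigl(A(y_j)Y_j\bigr)$ rather than your two traces separately.)

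There is, however, a genuine gap: the quadratic penalty $\Psi_j=\tfrac j2|x-y|^2$ is not strong enough to rule out $x_j=y_j$, and your fallback of ``direct appeal to the isolated critical point clause of Lemma~\ref{lem:X.X}'' does not deliver a contradiction. With $q=2$ one computes, for the test function $\vp_j(y)=-\tfrac j2|x_j-y|^2+\text{const}$,
\[
-\Delta_{\p}\vp_j(y)=j\,(j|x_j-y|)^{p(y)-2}\Bigl(n+(p(y)-2)-Dp(y)\cdot(x_j-y)\log(j|x_j-y|)\Bigr),
\]
and if $x_j=y_j$ then as $y\to y_j$ the bracket tends to $n+p(y_j)-2>0$ while $(j|x_j-y|)^{p(y)-2}\to+\infty$ whenever $p(y_j)<2$ (and to $1$ when $p(y_j)=2$). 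Hence $\limsup_{y\to y_j}(-\Delta_{\p}\vp_j(y))=+\infty$ (respectively $jn$), which is consistent with the conclusion $\limsup\ge\eps$ of Lemma~\ref{lem:X.X}; no contradiction is reached, and you cannot conclude $x_j\ne y_j$. The paper sidesteps this precisely by taking $q>\max\{2,\,p^-/(p^--1)\}$, which forces the exponent $(q-1)(p(y)-2)+q-2=q(p(y)-1)-p(y)$ to be strictly positive, so that $\Delta_\p\vp_j(y)\to 0$ as $y\to x_j$ and Lemma~\ref{lem:X.X} is contradicted when $x_j=y_j$ \emph{uniformly over all admissible values of} $p(y_j)$. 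Your alternative of perturbing the penalization so that the gradient never vanishes would need to be carried out concretely (and compatibly with the subsequent Ishii and square-root estimates), whereas the large-$q$ penalty resolves the degenerate case at essentially no extra cost. Once you replace $q=2$ by this larger exponent (and check that the bound $j|x_j-y_j|^{q-1+\delta}\to 0$, which replaces your use of $|\xi_j|\le L$, still comes from the Lipschitz continuity of $v$), the rest of your outline goes through.
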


\begin{proof}
  The argument follows the usual outline of proving a comparison
  principle for viscosity solutions to second-order elliptic
  equations.  We argue by contradiction and assume that $u-v$ has a
  strict interior maximum, that is,
  \begin{equation}\label{antithesis}
    \sup_\dom (u-v)> \sup_{\partial\dom}(u-v).
  \end{equation}
  We proceed by doubling the variables; consider the functions
  $$
  w_j(x,y)=u(x)- v(y)-\Psi_j(x,y), \quad j=1,2,\dots,
  $$
  where
  $$
  \Psi_j(x,y)=\tfrac jq|x-y|^q,
  $$
  and
  \[
  q>\max\{2,\frac{p^-}{p^- -1}\},\qquad p^-=\inf_{x\in\Omega} p(x) > 1.
  \]
  Let $(x_j,y_j)$ be a maximum of $w_j$ relative to
  $\overline\dom\times\overline\dom$.  By \eqref{antithesis}, we see
  that for $j$ sufficiently large, $(x_j,y_j)$ is an interior
  point. Moreover, up to selecting a subsequence, $x_j\to \hat x$ and
  $y_j\to \hat x$ as $j\to\infty$ and $\hat x$ is a maximum point for
  $u-v$ in $\dom$.  Finally, since
  \[
  u(x_j)-v(x_j) \le u(x_j)-v(y_j)-\frac{j}q\abs{x_j-y_j}^q,
  \]
  and $v$ is locally Lipschitz, we have
  \[
  \frac{j}q\abs{x_j-y_j}^q \le v(x_j)-v(y_j) \le C \abs{x_j-y_j},
  \]
  and hence dividing by $\abs{x_j-y_j}^{1-\delta}$ we get
  \begin{equation}\label{improved-rate}
    j\abs{x_j-y_j}^{q-1+\delta}\to 0\qquad\text{as $j\to\infty$ for any $\delta>0$}.
  \end{equation}
Observe that although $u$ is, in general, an extended real valued function,
it follows from Definition \ref{def:div-viscosity} that $u$ is finite at $x_j$.

  In what follows, we will need the fact that $x_j\not=y_j$. To see
  that this holds, let us denote
  $$
  \vp_j(y)= -\Psi_j(x_j,y) + v(y_j)+\Psi_j(x_j,y_j),
  $$
  and observe that since
  $$
  u(x)- v(y)-\Psi_j(x,y) \le u(x_j)- v(y_j)-\Psi_j(x_j,y_j)
  $$
  for all $x,y\in\dom$, we obtain by choosing $x=x_j$ that
  $$
  v(y)\ge -\Psi_j(x_j,y) + v(y_j)+\Psi_j(x_j,y_j)
  $$
  for all $y\in\dom$. That is, $\vp_j$ touches $v$ at $y_j$ from below, and thus
  \begin{equation}\label{eps-ineq}
  \limsup_{y\to y_j} (-\Delta_{\p}\vp_j(y))\ge \eps
  \end{equation}
  by Lemma \ref{lem:X.X}. On the other hand, a calculation yields
  \[
  \begin{split}
  \Delta_{\p} \vp_j(y) =&\, j^{p(y)-1}\abs{x_j-y}^{(q-1)(p(y)-2)+q-2}\Big[
  n+q-2+(p(y)-2)(q-1)\\
  &\,+\log(j \abs{x_j-y}^{q-1})(x_j-y)\cdot Dp(y),
  \Big]
  \end{split}
  \]
where
\[
(q-1)(p(y)-2)+q-2=q(p(y)-1)-p(y)>0
\]
by the choice of $q$. Hence if $x_j=y_j$, we would have
\[
\limsup_{y\to y_j} (-\Delta_{\p}\vp_j(y))=0,
\]
contradicting \eqref{eps-ineq}. Thus $x_j\ne y_j$ as desired.

  For equations that are continuous in all the variables, viscosity
  solutions may be equivalently defined in terms of the closures of
  super-- and subjets. The next aim is to exploit this fact, together
  with the maximum principle for semicontinuous functions, see
  \cite{crandall, crandallil92, koike04}.  Since $(x_j,y_j)$ is a
  local maximum point of $w_j(x,y)$, the aforementioned principle
  implies that there exist symmetric $n\times n$ matrices $X_j,Y_j$
  such that
  \begin{align*}
    (D_x\Psi_j(x_j,y_j),X_j)\in&\overline J^{2,+} u(x_j),\\
    (-D_y\Psi_j(x_j,y_j),Y_j)\in& \overline J^{2,-} v(y_j),
  \end{align*}
  where $\overline J^{2,+} u(x_j)$ and $\overline J^{2,-} v(y_j)$ are
  the closures of the second order superjet of $u$ at $x_{j}$ and the
  second order subjet of $v$ at $y_{j}$, respectively.
  Further, writing $z_j=x_j-y_j$, the matrices $X_j$ and $Y_j$ satisfy
  \begin{equation}\label{matineq}
    \begin{split}
      \left(\begin{array}{@{}cc@{}}
          X_j&0\\
          0&-Y_j\end{array}\right)
      \le&\, D^2\Psi_j(x_j,y_j)+\frac 1j
      \left[D^2\Psi_j(x_j,y_j)
      \right]^2\\
      =&\, j(|z_j|^{q-2}+2|z_j|^{2q-4})\left(\begin{array}{@{}cc@{}}
          I&-I\\-I&I\end{array}\right)\\
      +& j(q-2)(|z_j|^{q-4}+2q|z_j|^{2q-6})\left(\begin{array}{@{}cc@{}}
          z_j\otimes z_j&-z_j\otimes z_j\\
          -z_j\otimes z_j& z_j\otimes z_j\end{array}\right),
    \end{split}
  \end{equation}
  where
  \begin{align*}
    D^2\Psi_j(x_j,y_j)=&
    \begin{pmatrix}
       D_{xx} \Psi_j(x_j,y_j)&D_{xy} \Psi_j(x_j,y_j)\\
       D_{yx} \Psi_j(x_j,y_j)&D_{xx} \Psi_j(x_j,y_j)
    \end{pmatrix}\\
    =&j\abs{z_j}^{q-2}
    \begin{pmatrix}
      I&-I\\
      -I&I
    \end{pmatrix}\\
    & +j(q-2)\abs{z_j}^{q-4}
    \begin{pmatrix}
       z_j\otimes z_j&-z_j\otimes z_j\\
       -z_j\otimes z_j& z_j\otimes z_j
    \end{pmatrix}
  \end{align*}
  Observe that \eqref{matineq} implies
  \begin{equation}
    \label{eq:sharp-max-princ-estimate}
    \begin{split}
     X_j\xi\cdot\xi - Y_j\zeta\cdot\zeta
     &\le\, j\left[|z_j|^{q-2}+2|z_j|^{2q-4}\right]\abs{\xi-\zeta}^2\\
    &\phantom{\le\,}+j(q-2)\left[|z_j|^{q-4}+2q|z_j|^{2q-6}\right] (z_j\cdot (\xi-\zeta))^2\\
    &\le\, j\left[(q-1)|z_j|^{q-2}+2(q-1)^2|z_j|^{2(q-2)}\right]|\xi-\zeta|^2
  \end{split}
  \end{equation}
  for all $\xi,\zeta\in\Rn$.

  Now, $u$ is a viscosity subsolution of the $\p$-Laplace equation,
  and $v$ a viscosity solution of $-\Delta_{\p} v=\eps$.  By the
  equivalent definition in terms of jets, we obtain that
  $$
  -\tr(A(x_j,\eta_j)X_j)-B(x_j,\eta_j) \le 0
  $$
  and
  $$
  -\tr(A(y_j,\eta_j)Y_j)-B(y_j,\eta_j) \ge \eps.
  $$
  Here it is crucial that
  \[
  \begin{split}
  \eta_j=D_x\Psi_j(x_j,y_j)=-D_y\Psi_j(x_j,y_j)=j\abs{x_j-y_j}^{q-2}(x_j-y_j)
  \end{split}
  \]
  is nonzero as observed above. This guarantees that the
  $\p$-Laplace equation is non-singular at the neighborhoods of
  $(x_j,\eta_j,X_j)$ and $(y_j,\eta_j,Y_j)$, which in turn allows us
  to use jets. Notice also that since $v$ is locally Lipschitz, there
  is a constant $C>0$ such that $\abs{\eta_j}\le C$ for at least large
  $j$'s, for the reason that $(\eta_j,Y_j)\in\overline J^{2,-}
  v(y_j)$.

  Since $\eta_j\not=0$, $A(\cdot,\cdot)$ is positive definite, so that
  its matrix square root exists.  We denote
  $A^{1/2}(x_j)=A(x_j,\eta_j)^{1/2}$ and
  $A^{1/2}(y_j)=A(y_j,\eta_j)^{1/2}$.  Observe that the matrices
  $X_j$, $Y_j$ as well as $A(\cdot,\cdot)$, and $A^{1/2}(\cdot)$ are
  symmetric. We use matrix calculus to obtain
  \[
  \begin{split}
  \tr(A(x_j,\eta_j)X_j )&= \tr(A^{1/2}(x_j)A^{1/2}(x_j) X_j)\\
  &=\tr(A^{1/2}(x_j)^T X_j A^{1/2}(x_j))\\
  &=\sum_{k=1}^n X_j A_k^{1/2}(x_j)\cdot A_k^{1/2}(x_j),
  \end{split}
  \]
  where $A_k^{1/2}(x_j)$ denotes the $k$th column of $A^{1/2}(x_j)$
  This together with \eqref{eq:sharp-max-princ-estimate} implies
  \[
  \begin{split}
    0<\eps \le&\, B(x_j,\eta_j)-B(y_j,\eta_j)\\
    &+\sum_{k=1}^n X_j A_k^{1/2}(x_j)\cdot A_k^{1/2}(x_j)
    - \sum_{k=1}^n Y_j A_k^{1/2}(y_j)\cdot A_k^{1/2}(y_j)\\
    \le&\, B(x_j,\eta_j)-B(y_j,\eta_j)
+Cj\abs{x_j-y_j}^{q-2}\norm{A^{1/2}(x_j)-A^{1/2}(y_j)}_2^2\\
    \le&\, B(x_j,\eta_j)-B(y_j,\eta_j)\\
    &+\frac{Cj\abs{x_j-y_j}^{q-2}}{
      \Big(\lambda_{\min}(A^{1/2}(x_j))+\lambda_{\min}(A^{1/2}(y_j))\Big)^2
    }
    \norm{A(x_j,\eta_j)-A(y_j,\eta_j)}_2^2.
  \end{split}
  \]
  The last inequality is the local Lipschitz continuity of $A\mapsto
  A^{1/2}$, see \cite[p. 410]{hornj85}, and $\lambda_{\min}(M)$ denotes
the smallest eigenvalue of a symmetric $n\times n$ matrix $M$.

  Since $\p \in C^1(\R^n)$, and
  \[
  \begin{split}
  \abs{\abs{\eta_j}^{p(x_j)-2}-\abs{\eta_j}^{p(y_j)-2}}&=\abs{\exp(\log(\abs{\eta_j}^{p(x_j)-2}))
-\exp(\log(\abs{\eta_j}^{p(y_j)-2}))}\\
  &\leq \abs{\parts{\,\exp((s-2)
  \log\abs{\eta_j})}{s}}\abs{p(x_j)-p(y_j)}\\
  &= \abs{\log(\abs{\eta_j})}\abs{\eta_j}^{s-2} \abs{p(x_j)-p(y_j)},
  \end{split}
  \]
  for some $s\in [p(x_j),p(y_j)]$,
  we have
  \[
  \begin{split}
    B(x_j,\eta_j)&-B(y_j,\eta_j)\\
    &=\,
    \abs{\eta_j}^{p(x_j)-2}\log\abs{\eta_j}\eta_j\cdot D p(x_j)-
    \abs{\eta_j}^{p(y_j)-2}\log\abs{\eta_j}\eta_j\cdot D p(y_j)\\
    &\le\, \abs{\eta_j}^{p(x_j)-1}\abs{\log\abs{\eta_j}}\,\abs{D p(x_j)-D p(y_j)}\\
    &\hspace{1 em}+\abs{\eta_j}\abs{\log\abs{\eta_j}}\,\abs{D p(y_j)}
    \,\abs{\abs{\eta_j}^{p(x_j)-2}-\abs{\eta_j}^{p(y_j)-2}}\\
    &\le\, \abs{\eta_j}^{p(x_j)-1}\abs{\log\abs{\eta_j}}\, \abs{D p(x_j)-D p(y_j)}\\
    &\hspace{1 em}+C\abs{\eta_j}^{s-1}\log^2\abs{\eta_j} \abs{p(x_j)-p(y_j)}.
  \end{split}
  \]
  Moreover,
  \begin{multline*}
    \norm{A(x_j,\eta_j)-A(y_j,\eta_j)}_2\\
    \begin{aligned}
      \le&\, \abs{\abs{\eta_j}^{p(x_j)-2}-\abs{\eta_j}^{p(y_j)-2}}\\
      &+\abs{\abs{\eta_j}^{p(x_j)-2}(p(x_j)-2)-\abs{\eta_j}^{p(y_j)-2}(p(y_j)-2)}\\
      \le&\, \max\Big\{3-p(x_j),p(x_j)-1\Big\}\abs{\log\abs{\eta_j}}\,
      \abs{\eta_j}^{s-2}\abs{p(x_j)-p(y_j)}\\
      &+\abs{\eta_j}^{p(y_j)-2}\abs{p(x_j)-p(y_j)}\\
      \le&\, C\left( (p^++1)\abs{\log\abs{\eta_j}}\,\abs{\eta_j}^{s-2}
        +\abs{\eta_j}^{p(y_j)-2}\right)\abs{x_j-y_j},
    \end{aligned}
  \end{multline*}
  and
  \[
  \begin{split}
    \lambda_{\min}(A^{1/2}(x_j))=&\,\left(\lambda_{\min}(A(x_j,\eta_j))\right)^{1/2}
    = \left(\min_{\abs{\xi}=1} A(x_j,\eta_j)\xi\cdot\xi\right)^{1/2}\\
    \ge&\, \min\left\{1,\sqrt{p(x_j)-1}\right\}\abs{\eta_j}^{\frac{p(x_j)-2}2}.
  \end{split}
  \]
  Thus
  \begin{equation}\label{final_est}
    \begin{split}
      0<\eps &\le\,  \abs{\eta_j}^{p(x_j)-1}\abs{\log\abs{\eta_j}}\, \abs{D p(x_j)-D p(y_j)}\\
    &\hspace{1 em}+C\abs{\eta_j}^{s-1}\log^2\abs{\eta_j} \abs{p(x_j)-p(y_j)}\\
      &\hspace{1 em}+\frac{C\left((p^++1)\abs{\log\abs{\eta_j}}\,\abs{\eta_j}^{s-2}
          +\abs{\eta_j}^{p(y_j)-2}\right)^2}
      {\min\{1,p^--1\}
\left(\abs{\eta_j}^{\frac{p(x_j)-2}2}+\abs{\eta_j}^{\frac{p(y_j)-2}2}\right)^2}
      j\abs{x_j-y_j}^q.
    \end{split}
  \end{equation}
  The first two terms on the right hand are easily shown to converge
  to 0 as $j\to\infty$. Indeed, since $x_j\to \hat x$, $p(\hat x)>1$
  and $\abs{\eta_j}\le C$, we have that
  $\abs{\eta_j}^{p(x_j)-1}\abs{\log\abs{\eta_j}}$ remains bounded as
  $j\to\infty$. Thus, owing to the continuity of $x\mapsto Dp(x)$, the
  first term converges to zero as $j\to\infty$. The second term is
  treated in a similar way.

  Finally, we deal with the third term. Recalling that $\abs{\eta_j}=
  j\abs{x_j-y_j}^{q-1}$, we have
  \[
  \begin{split}
    &\left(
      \frac{\abs{\log\abs{\eta_j}}\abs{\eta_j}^{s-2}}{\abs{\eta_j}^{\frac{p(x_j)-2}2}
        +\abs{\eta_j}^{\frac{p(y_j)-2}2}}
    \right)^2 j\abs{x_j-y_j}^q
    \le\, \log^2\abs{\eta_j} \abs{\eta_j}^{2s-p(x_j)-2}j\abs{x_j-y_j}^q\\
    &\quad\le\, \log^2\abs{\eta_j} j^{2s-p(x_j)-1}\abs{x_j-y_j}^{q+(q-1)(2s-p(x_j)-2)}\\
    &\quad=\, \log^2(j\abs{x_j-y_j}^{q-1}) \Big[j\abs{x_j-y_j}^{q-1} \Big]^{2s-p(x_j)-1}\abs{x_j-y_j}\\
    &\quad=\, \log^2(j\abs{x_j-y_j}^{q-1}) \Big[j\abs{x_j-y_j}^{q-1+\delta} \Big]^{2s-p(x_j)-1}
    \abs{x_j-y_j}^{1-\delta(2s-p(x_j)-1)}.
  \end{split}
  \]
  Now, since $2s-p(x_j)-1\to p(\hat x)-1>0$ and
  $j\abs{x_j-y_j}^{q-1+\delta}\to 0$ as $j\to\infty$, we see that
  \[
  \Big[j\abs{x_j-y_j}^{q-1+\delta} \Big]^{2s-p(x_j)-1}\to 0
  \qquad\text{as $j\to\infty$}.
  \]
  Further, we write
  \begin{multline*}
    \log^2(j\abs{x_j-y_j}^{q-1})\abs{x_j-y_j}^{1-\delta(2s-p(x_j)-1)}\\
    =\log^2(j\abs{x_j-y_j}^{q-1})(j\abs{x_j-y_j}^{q-1})^{
      \frac{1-\delta(2s-p(x_j)-1)}{q-1}} \left(\frac1j\right)^{\frac{(1-\delta(2s-p(x_j)-1))}{q-1}},
  \end{multline*}
  and note that choosing $\delta>0$ so small that $1-\delta(p^+-1)>0$
  suffices for the third term to converge to zero as $j\to\infty$. A
  contradiction has been reached.
\end{proof}

\section{An application: a Rad\'o type removability theorem}

\label{sec:rado}

The classical theorem of Rad\'o says that, if the continuous complex
function $f$ is analytic when $f(z)\ne 0$, then it is analytic in its
domain of definition. This result has been extended for solutions of
various partial differential equations, including the Laplace equation
and the $p$-Laplace equation, see the references in \cite{juutinenl05}.
Here we prove a corresponding
removability result for $\p$-harmonic functions.

\begin{theorem}\label{thm:rado}
  If a function $u\in C^1(\Omega)$ is a weak solution of \eqref{eq:div-equation} in
  $\Omega\setminus\{x\colon u(x)=0\}$, then $u$ is a weak solution in
  the entire domain $\Omega$.
\end{theorem}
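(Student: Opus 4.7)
The plan is to use the equivalence of weak and viscosity solutions (Corollary~\ref{thm:equiv-sols}) twice. Since $u$ is a weak---hence viscosity---solution in the open set $\Omega\setminus Z$ where $Z=\{u=0\}$, it suffices to extend the viscosity solution property to all of $\Omega$; the equivalence then yields that $u$ is a weak solution on $\Omega$. Thus I only need to verify the viscosity super- and subsolution conditions at points $x_0\in Z$.

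Fix $x_0\in Z$ and let $\varphi\in C^2(\Omega)$ touch $u$ from below at $x_0$ with $D\varphi(x_0)\neq 0$. Since $u\in C^1$, the tangency forces $Du(x_0)=D\varphi(x_0)\neq 0$, so by the implicit function theorem $Z$ is a $C^1$ hypersurface in a ball $B=B(x_0,r)\Subset\Omega$, separating the open sets $\Omega^\pm\cap B=\{\pm u>0\}\cap B$. I argue by contradiction: suppose $-\Delta_{\p}\varphi(x_0)<0$. After the standard perturbation $\varphi\mapsto\varphi-\eta|x-x_0|^2$ with $\eta>0$ small, one may take $u-\varphi\geq\eta|x-x_0|^2$ on $\overline B$, $D\varphi\neq 0$ on $\overline B$, and $\varphi$ a classical (hence weak) subsolution on $B$.

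The crux is to transfer this contradictory hypothesis at $x_0$ into a contradiction at nearby points of $\Omega^+\cap B\subset\Omega\setminus Z$, where $u$ is already a viscosity supersolution. For small $\lambda>0$, set
\[
\varphi_\lambda(x):=\varphi(x)+\lambda Du(x_0)\cdot(x-x_0)-c_\lambda,\qquad c_\lambda:=\max_{\overline B}\bigl[\varphi+\lambda Du(x_0)\cdot(\cdot-x_0)-u\bigr],
\]
so that $\varphi_\lambda\leq u$ on $\overline B$ with equality at a maximizer $y_\lambda$. Testing at $x=x_0$ together with the $C^1$-tangency $u-\varphi=o(|x-x_0|)$ yields $c_\lambda>0$, hence $y_\lambda\neq x_0$. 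Optimality at $y_\lambda$ gives $\lambda Du(x_0)\cdot(y_\lambda-x_0)=c_\lambda+(u-\varphi)(y_\lambda)\geq c_\lambda+\eta|y_\lambda-x_0|^2$, which forces $|y_\lambda-x_0|\leq\lambda|Du(x_0)|/\eta$ (so $y_\lambda\to x_0$) and $Du(x_0)\cdot(y_\lambda-x_0)>0$. Combined with the expansion $u(y_\lambda)=Du(x_0)\cdot(y_\lambda-x_0)+o(|y_\lambda-x_0|)$, this places $y_\lambda\in\Omega^+$ for small $\lambda>0$. The viscosity supersolution property of $u$ at $y_\lambda$ then yields $-\Delta_{\p}\varphi_\lambda(y_\lambda)\geq 0$; letting $\lambda\to 0$ and using continuity of $-\Delta_{\p}$ on $\{D\varphi\neq 0\}$ produces $-\Delta_{\p}\varphi(x_0)\geq 0$, a contradiction. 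The subsolution case follows by applying the same argument to $-u$, which is also a weak solution on $\Omega\setminus Z$ by the symmetry $\Delta_{\p}(-u)=-\Delta_{\p}u$; together they show $u$ is a viscosity solution on $\Omega$, and Corollary~\ref{thm:equiv-sols} finishes the proof.

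The delicate step is ensuring $y_\lambda\in\Omega^+$ for a sequence $\lambda\to 0$. Since $u$ is only $C^1$, there is no classical Hessian at $x_0$ to pin down $y_\lambda-x_0$ as a function of $\lambda$ by an implicit function argument; one must quantitatively weigh the lower bound $Du(x_0)\cdot(y_\lambda-x_0)\geq\eta|y_\lambda-x_0|^2/\lambda$ against the modulus of continuity of $Du$ controlling the $o(|y_\lambda-x_0|)$ error in $u(y_\lambda)$, calling on the $C^{1,\alpha}$ interior regularity of weak solutions in $\Omega\setminus Z$ to close the estimates.
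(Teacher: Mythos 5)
Your overall strategy---reduce to verifying the viscosity inequalities at $x_0\in Z=\{u=0\}$, where the $C^1$-tangency forces $Du(x_0)=D\varphi(x_0)\neq 0$, and then contradict a bad test function by sliding it in the normal direction into $\Omega^+$---matches the spirit of the first alternative the paper alludes to (the Hopf-type argument via \cite[Theorem 2.2]{juutinenl05}), which the paper cites but does not carry out. Your reduction, via the observation that test functions with vanishing gradient are excluded, to removing the locally $C^1$-hypersurface $\{u=0,\ Du\neq 0\}$ is exactly the paper's key step.

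The ``delicate step''---showing $y_\lambda\in\Omega^+$---has a genuine gap as written, and your proposed fix does not close it. The $C^1$ Taylor expansion of $u$ gives only $u(y_\lambda)\geq\eta|y_\lambda-x_0|^2/\lambda-o(|y_\lambda-x_0|)$, and with no effective lower bound on $|y_\lambda-x_0|$ relative to $\lambda$, the quadratic term need not dominate the error. Appealing to the $C^{1,\alpha}$ interior regularity of $u$ in $\Omega\setminus Z$ cannot repair this: those estimates are interior to $\Omega^\pm$ and degenerate as one approaches $Z=\partial\Omega^\pm\cap\Omega$, so they yield no quantitative modulus for $Du$ at $x_0\in Z$. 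The gap is nevertheless closable, and without any modulus of $Du$: expand the smooth test function instead of $u$. Writing $r=|y_\lambda-x_0|$, one has $u(y_\lambda)=\varphi(y_\lambda)+(u-\varphi)(y_\lambda)$ with $(u-\varphi)(y_\lambda)\geq\eta r^2$, while the $C^2$ Taylor expansion of $\varphi$ at $x_0$ (using $\varphi(x_0)=0$, $D\varphi(x_0)=Du(x_0)$) gives $\varphi(y_\lambda)\geq Du(x_0)\cdot(y_\lambda-x_0)-\tfrac12 M r^2$ with $M=\sup_{\overline B}\|D^2\varphi\|$. Combined with your bound $Du(x_0)\cdot(y_\lambda-x_0)\geq\eta r^2/\lambda$, this yields $u(y_\lambda)\geq(\eta/\lambda+\eta-M/2)\,r^2>0$ once $\lambda<2\eta/M$, since $y_\lambda\neq x_0$. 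With this substitution the remainder of your argument goes through.
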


 \begin{proof}
 A key step in the proof is to observe that if $u\in C^1(\Omega)$ is a weak solution in
   \[
   \begin{split}
   \Om\setminus\{x\colon \,u(x)=0 \},
   \end{split}
   \]
  then it is a weak solution in $\Om\setminus U$, where
  \[
   \begin{split}
   U:=\{x\colon \,u(x)=0  \textrm{ and } Du(x)\neq 0\}.
   \end{split}
   \]
  This readily follows from Corollary \ref{thm:equiv-sols}, because in the definition of a viscosity solution we
ignore the test functions with $D\vp(x_0)=0$. Since $u\in C^1(\Omega)$, it follows that if $Du(x_0)=0$, then $D\vp(x_0)=0$.

Thus the original problem has been reduced to proving the removability
of $U$, which is locally a $C^1$-hypersurface.  There are at least two
ways to accomplish this. One option is to apply \cite[Theorem
2.2]{juutinenl05}, which means using viscosity solutions and an
argument similar to Hopf's maximum principle. The second alternative is
to use a coordinate transformation and map $U$ to a hyperplane, and
then prove the removability of a hyperplane by a relatively simple
computation. The price one has to pay in this approach is that the
equation changes, but fortunately this is allowed in \cite[Lemma
2.22]{martio88}.
\end{proof}

\begin{remark}
We do not know how to prove Theorem \ref{thm:rado} without using viscosity solutions, not even in the
simpler case when $\p$ is constant. In particular, the removability of a level set is an open question
for the weak solutions of \eqref{eq:div-equation} when $\p$ is, say, only continuous. Theorem \ref{thm:rado}
fails if $u$ is assumed to be only Lipschitz continuous.
\end{remark}

\section{The normalized $\p$-Laplacian}\label{sec:normalized}

There has recently been some interest in another variable exponent
version of the $p$-Laplacian, called the normalized $\p$-Laplacian
\begin{equation}
  \label{eq:normalized-equation}
  \begin{split}
    \Delta^N_{p(x)} u(x):= \Delta u(x)+(p(x)-2)\Delta_\infty u(x),
  \end{split}
\end{equation}
where
\[
\Delta_\infty u(x)= D^2 u(x)\frac{Du(x)}{\abs{Du(x)}}\cdot\frac{Du(x)}{\abs{Du(x)}}.
\]
In particular, the weak solutions of the equation $-\Delta^N_{p(x)} u(x)=0$, or rather a scaled version of it,
were studied by Adamowicz and Hästö \cite{hastoa10} in connection with mappings of finite
distortion. In this section, we prove the comparison principle for viscosity subsolutions and strict viscosity
supersolutions of this equation. Hence we obtain an almost exact analogue of Proposition \ref{prop:diver_strictcomp}.
However, owing to the fact that some of the major tools in the weak theory (comparison principle, uniqueness, stability)
for the normalized $\p$-Laplace equation are still missing,
the uniqueness of viscosity solutions and the equivalence of weak and viscosity solutions remain open.

The normalized $\p$-Laplacian has a bounded singularity at the points where the gradient $Du$ vanishes, and the
viscosity solutions can thus be defined in a standard way, by using the upper and lower semicontinuous
envelopes of the operator \eqref{eq:normalized-equation}. For any constant $c\in\R$, we say that an upper semicontinuous function
$v\colon\Om\to\R$ is a viscosity subsolution to $-\Delta^N_{p(x)} v(x)=c$ if, whenever
$\vp \in C^2(\Om)$ is such that $v(x_0) = \vp(x_0)$, $v(x) < \vp(x)$ for $x \neq x_0$, then
\[
\begin{cases}
\phantom{-\Delta \vp(x_0)-(p(x_0)-2)}-\Delta^N_{p(x)} \vp(x_0)\le c,& \text{if $D\vp(x_0)\ne 0$}\\
-\Delta \vp(x_0)-(p(x_0)-2)\lambda_{\min}(D^2\vp(x_0))\le c,& \text{if $D\vp(x_0)= 0$ and $p(x_0)< 2$},\\
-\Delta \vp(x_0)-(p(x_0)-2)\lambda_{\max}(D^2\vp(x_0))\le c,& \text{if $D\vp(x_0)= 0$ and $p(x_0)\ge 2$},
\end{cases}
\]
where $\lambda_{\max}(A)$ and $\lambda_{\min}(A)$ denote the largest and the smallest eigenvalue, respectively,
of a symmetric $n\times n$ matrix $A$. And as usual, a lower semicontinuous function $w$ is a viscosity supersolution
if $-w$ is a viscosity subsolution.

\begin{proposition}\label{norm_strictcomp}
  Suppose that $\dom$ is a bounded domain, $u\in C(\overline\dom)$ is
  a viscosity subsolution to $-\Delta_{\p}^Nu=0$, and $v\in
  C(\overline\dom)$ is a viscosity supersolution of
  \begin{displaymath}
    -\Delta^N_{\p} v=\eps,\quad \eps>0
  \end{displaymath}
  in $\dom$ such that
  \begin{displaymath}
    u\leq v \quad \text{on}\quad \partial\dom.
  \end{displaymath}
  Then
  \begin{displaymath}
      u\le v \quad\text{in}\quad \dom.
  \end{displaymath}
\end{proposition}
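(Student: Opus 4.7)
The plan is to adapt the doubling-of-variables argument from the proof of Proposition~\ref{prop:diver_strictcomp_restated}, exploiting that the normalized operator is bounded (it depends on $D\vp$ only through its direction). I would argue by contradiction: assume $M := \sup_\Omega(u-v) > 0$ and introduce
\[
w_j(x,y) = u(x) - v(y) - \tfrac{j}{q}|x-y|^q, \qquad j \in \mathbb{N},\ q > 2,
\]
with interior maxima $(x_j, y_j) \to (\hat x, \hat x)$, $u(\hat x) - v(\hat x) = M$, and $j|x_j - y_j|^q \to 0$. Ishii's maximum principle for semicontinuous functions then produces symmetric matrices $X_j \leq Y_j$ and the common gradient $\eta_j = j|x_j - y_j|^{q-2}(x_j - y_j)$ with $(\eta_j, X_j) \in \overline J^{2,+}u(x_j)$, $(\eta_j, Y_j) \in \overline J^{2,-}v(y_j)$, and the norm bound $\|X_j\|, \|Y_j\| \leq Cj|x_j - y_j|^{q-2}$.

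Unlike in Proposition~\ref{prop:diver_strictcomp_restated}, where the singular factor $|\eta|^{p-2}$ rules out $x_j = y_j$ outright, here I need to exclude this case using the envelope form of the viscosity definition stated above. When $z_j = x_j - y_j = 0$ the matrix $D^2 \Psi_j$ vanishes, so Ishii's estimate degenerates to $X_j \leq 0 \leq Y_j$, hence $\lambda_{\max}(X_j) \leq 0 \leq \lambda_{\min}(Y_j)$. Writing the sub- and supersolution inequalities in terms of the appropriate extreme eigenvalues (depending on whether $p(x_j) \geq 2$ or $<2$) and subtracting, one obtains an inequality of the form
\[
\tr(Y_j - X_j) + (p(x_j) - 2)\,T_j \leq -\eps,
\]
where $T_j \geq 0$. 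Since $X_j \leq 0 \leq Y_j$ gives $\tr(Y_j - X_j) \geq T_j$, using $p^- > 1$ bounds the left-hand side below by $(p^- - 1)T_j \geq 0$, contradicting $\eps > 0$. Hence $x_j \neq y_j$ and $\eta_j \neq 0$ for all large $j$.

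For $\eta_j \neq 0$ the classical (non-envelope) viscosity inequalities apply. Subtracting the sub condition at $(x_j, \eta_j, X_j)$ from the super condition at $(y_j, \eta_j, Y_j)$ and regrouping yields
\[
\eps \;\leq\; \Bigl[-\tr(Y_j - X_j) - (p(y_j) - 2)(Y_j - X_j)\hat\eta_j\cdot\hat\eta_j\Bigr] + \bigl(p(x_j) - p(y_j)\bigr)\, X_j\hat\eta_j\cdot\hat\eta_j.
\]
The bracketed expression is non-positive thanks to the degenerate ellipticity of the normalized operator together with $p^- > 1$ (the only non-trivial case, $p(y_j) < 2$, uses $(2 - p^-)\lambda_{\max}(Y_j - X_j) \leq \tr(Y_j - X_j)$). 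The contradiction therefore reduces to showing that the ``$x$-perturbation'' term $(p(x_j) - p(y_j))\,X_j\hat\eta_j\cdot\hat\eta_j$ tends to zero as $j \to \infty$. With $p \in C^1$ and the Ishii matrix bound, this term is controlled by $CL\,j|x_j-y_j|^{q-1}$, but the mere continuity assumption on $u$ and $v$ yields only $j|x_j-y_j|^q \to 0$, which is not enough. This is the main obstacle.

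To overcome it, I would regularize by the standard sup- and inf-convolutions
\[
u^\sigma(x) = \sup_{y\in\overline{\Omega}}\Bigl\{u(y) - \tfrac{1}{2\sigma}|x-y|^2\Bigr\}, \qquad v_\sigma(x) = \inf_{y\in\overline{\Omega}}\Bigl\{v(y) + \tfrac{1}{2\sigma}|x-y|^2\Bigr\},
\]
which are locally Lipschitz, converge uniformly to $u$ and $v$ as $\sigma \to 0$, and (because $p \in C^1$) satisfy the corresponding viscosity inequalities up to an error $\omega(\sigma) \to 0$ in a slightly smaller subdomain. The Lipschitz control on $v_\sigma$ forces $|x_j - y_j| \leq Cj^{-1/(q-1)}$ and hence the improved rate $j|x_j - y_j|^{q-1+\delta} \to 0$ for small $\delta > 0$ analogous to \eqref{improved-rate}; combined with the one-sided control on $X_j, Y_j$ coming from semi-convexity and semi-concavity of the regularized functions, this is enough to drive the perturbation term to zero. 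Letting $\sigma \to 0$ and using that the strict inequality $\eps > 0$ absorbs the $\omega(\sigma)$ error recovers the original statement.
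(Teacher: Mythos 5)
Your setup (doubling of variables, excluding $x_j=y_j$, Ishii's lemma) matches the paper's, but your treatment of the crucial difference of viscosity inequalities diverges from the paper's and runs into a gap that you yourself identify. After regrouping you reduce to controlling $(p(x_j)-p(y_j))\,X_j\hat\eta_j\cdot\hat\eta_j$, and using only $|X_j|\le Cj|x_j-y_j|^{q-2}$ together with $p\in C^{0,1}$ gives $Cj|x_j-y_j|^{q-1}$, which is merely bounded (not vanishing) under the rates available when $u,v$ are only continuous. Your proposed fix via sup/inf-convolutions is a substantial machinery detour: one must verify that the regularized functions satisfy the correct viscosity inequalities in a shrunken domain with an explicitly controlled $\omega(\sigma)$ error for an $x$-dependent operator, and, more importantly, the improved rate $j|x_j-y_j|^{q-1+\delta}\to 0$ that you cite still does \emph{not} give $j|x_j-y_j|^{q-1}\to 0$; you would actually need to invoke the semi-convexity bound $-\frac{1}{\sigma}I\le X_j\le Y_j\le\frac{1}{\sigma}I$ to cap $|X_j\hat\eta_j\cdot\hat\eta_j|$ independently of $j$, then send $j\to\infty$ before $\sigma\to 0$. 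That is workable in principle but is not what your sketch actually says, and it makes the argument much heavier.

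The paper's proof avoids all of this with a single elegant step that you miss. After subtracting the two viscosity inequalities one gets
\[
0<\eps\le \tr(X_j-Y_j)+(p(x_j)-2)X_j\hat\eta_j\cdot\hat\eta_j-(p(y_j)-2)Y_j\hat\eta_j\cdot\hat\eta_j,
\]
and, using $\tr(X_j-Y_j)\le (X_j-Y_j)\hat\eta_j\cdot\hat\eta_j$ (valid since $X_j-Y_j\le 0$), the right-hand side equals
\[
X_j\bigl(\sqrt{p(x_j)-1}\,\hat\eta_j\bigr)\cdot\bigl(\sqrt{p(x_j)-1}\,\hat\eta_j\bigr)-Y_j\bigl(\sqrt{p(y_j)-1}\,\hat\eta_j\bigr)\cdot\bigl(\sqrt{p(y_j)-1}\,\hat\eta_j\bigr).
\]
The point is to feed the \emph{different} vectors $\xi_j=\sqrt{p(x_j)-1}\,\hat\eta_j$ and $\zeta_j=\sqrt{p(y_j)-1}\,\hat\eta_j$ into the full two-variable Ishii estimate, which bounds $X_j\xi\cdot\xi - Y_j\zeta\cdot\zeta$ by $Cj|x_j-y_j|^{q-2}|\xi-\zeta|^2$. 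Since $|\xi_j-\zeta_j|=|\sqrt{p(x_j)-1}-\sqrt{p(y_j)-1}|\le C|x_j-y_j|$ by $p\in C^1$ and $p^->1$, this gives $\eps\le Cj|x_j-y_j|^q\to 0$ (the paper fixes $q=4$) with no regularity of $u$ or $v$ beyond continuity and no sup/inf-convolutions. Your instinct to isolate the $p(x_j)-p(y_j)$ dependence was the right target, but the winning move is to \emph{not} split off that term: instead absorb the coefficients $\sqrt{p(\cdot)-1}$ into the test vectors so that Ishii's off-diagonal coupling $|\xi-\zeta|^2$ manufactures the missing factor of $|x_j-y_j|^2$ for you.
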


\begin{proof}
  The proof is quite similar to that of Proposition
  \ref{prop:diver_strictcomp}.  We again argue by contradiction and
  assume that $u-v$ has an interior maximum.  Consider the functions
  $$
  w_j(x,y)=u(x)- v(y)-\Psi_j(x,y), \quad j=1,2,\dots,
  $$
  where this time
  $$
  \Psi_j(x,y)=\tfrac j4|x-y|^4,
  $$
  and let $(x_j,y_j)$ be a maximum of $w_j$ relative to
  $\overline\dom\times\overline\dom$.  For $j$ sufficiently large,
  $(x_j,y_j)$ is an interior point. Moreover, $j|x_j-y_j|^4\to 0$ as
  $j\to\infty$, and $x_j,y_j\to \hat x$ and $\hat x$ is a maximum
  point for $u-v$ in $\dom$.

  Again we see that $v+\Psi_j(x_j,\cdot)$ has a local minimum at $y_j$, and thus
  \[
  -\Delta^N_{\p} (-\Psi_j(x_j,y_j))\ge \eps.
  \]
  If $p(y_j)\ge 2$, this implies that
  \[
  \begin{split}
    \eps \le&\, \Delta \Psi_j(x_j,y_j)+(p(y_j)-2)\lambda_{\max}(D^2  \Psi_j(x_j,y_j))\\
    =&\, j(n+2)\abs{x_j-y_j}^2+3j(p(y_j)-2)\abs{x_j-y_j}^2,
  \end{split}
  \]
  and if $1<p(y_j)<2$, then
  \[
  \begin{split}
    \eps \le&\, \Delta \Psi_j(x_j,y_j)+(p(y_j)-2)\lambda_{\min}(D^2  \Psi_j(x_j,y_j))\\
    =&\, j(n+2)\abs{x_j-y_j}^2+j(p(y_j)-2)\abs{x_j-y_j}^2.
  \end{split}
  \]
  In particular, in any case we must have $x_j\ne y_j$.

  Since $(x_j,y_j)$ is a local maximum point of $w_j(x,y)$, the
  maximum principle for semicontinuous functions implies that there
  exist symmetric $n\times n$ matrices $X_j,Y_j$ such that
  $$
  \begin{array}{r@{\;}l}
    (D_x\Psi_j(x_j,y_j),X_j)\in&\, \overline J^{2,+} u(x_j),\\[6pt]
    (-D_y\Psi_j(x_j,y_j),Y_j)\in&\, \overline J^{2,-} v(y_j),
  \end{array}
  $$
  and
\[
\begin{split}
\left(\begin{array}{@{}cc@{}}
 X_j&0\\
0&-Y_j\end{array}\right)
\le&\, D^2\Psi_j(x_j,y_j)+\frac 1j
\left[D^2\Psi_j(x_j,y_j)
\right]^2.
\end{split}
\]
We have
\[
\begin{split}
 D^2\Psi_j(x_j,y_j)=j\abs{z_j} \left(\begin{array}{@{}cc@{}}
I&-I\\-I&I\end{array}\right)+2j \left(\begin{array}{@{}cc@{}}
z_j\otimes z_j&-z_j\otimes z_j\\
-z_j\otimes z_j& z_j\otimes z_j\end{array}\right),
\end{split}
\]
where $z_j=x_j-y_j$, and thus
\begin{equation}
\label{matineq2}
\begin{split}
\left(\begin{array}{@{}cc@{}}
 X_j&0\\
0&-Y_j\end{array}\right)&\le j(|z_j|^{2}+2|z_j|^{4})\left(\begin{array}{@{}cc@{}}
I&-I\\-I&I\end{array}\right)\\
&\phantom{\le}+ 2j(1+8|z_j|^{2})\left(\begin{array}{@{}cc@{}}
z_j\otimes z_j&-z_j\otimes z_j\\
-z_j\otimes z_j& z_j\otimes z_j\end{array}\right).
\end{split}
\end{equation}
This implies
\begin{equation}\label{diffestimate}
  \begin{split}
     X_j\xi\cdot\xi - Y_j\zeta\cdot\zeta
\le\, j\left[3|z_j|^{2}+18|z_j|^{4}\right]|\xi-\zeta|^2
  \end{split}
\end{equation}
for all $\xi,\zeta\in\Rn$. In particular, $X_j\le Y_j$ and hence
$\tr(X_j)\le \tr(Y_j)$.

Since $u$ is a viscosity solution of $-\Delta^N_{\p} u=0$, $v$ satisfies
$-\Delta^N_{\p} v=\eps$, and $\eta_j=j\abs{x_j-y_j}^2(x_j-y_j)\ne 0$, we obtain
using \eqref{diffestimate} that
\[
\begin{split}
0<\eps\le&\, \tr(X_j-Y_j)+(p(x_j)-2)X_j\hat\eta_j\cdot\hat\eta_j
-(p(y_j)-2)Y_j\hat\eta_j\cdot\hat\eta_j\\
\le&\, (X_j-Y_j)\hat\eta_j\cdot\hat\eta_j +(p(x_j)-2)X_j\hat\eta_j\cdot\hat\eta_j
-(p(y_j)-2)Y_j\hat\eta_j\cdot\hat\eta_j\\
=&\, X_j(\sqrt{p(x_j)-1}\,\hat\eta_j)\cdot(\sqrt{p(x_j)-1}\,\hat\eta_j)
\\
&\hspace{10 em}
-Y_j(\sqrt{p(y_j)-1}\,\hat\eta_j)\cdot(\sqrt{p(y_j)-1}\,\hat\eta_j)\\
\le&\, Cj\abs{x_j-y_j}^2 \abs{\sqrt{p(x_j)-1}-\sqrt{p(y_j)-1}}^2\\
\le&\, Cj\abs{x_j-y_j}^2 \frac{\abs{p(x_j)-p(y_j)}^2}{(\sqrt{p(x_j)-1}+\sqrt{p(y_j)-1})^2},
\end{split}
\]
where $\hat\eta_j=\eta_j/\abs{\eta_j}$.
The right hand side tends to zero as $j\to\infty$ because $\p$ is assumed to be Lipschitz continuous,
$j\abs{x_j-y_j}^4\to 0$, and $p(\hat x)>1$. This contradiction completes the proof.
\end{proof}

\begin{remark} Unlike its counterpart for the divergence form $\p$-Laplacian \eqref{eq:div-equation},
Proposition \ref{norm_strictcomp} does not need the assumption
that either $u$ or $v$ is locally Lipschitz. However, if that is the case, then
it is enough to assume that $\p$ is just H\"older continuous with exponent
$\alpha>1/2$. Indeed, we have
\[
u(y_j)-v(y_j) \le u(x_j)-v(y_j)-\frac{j}4\abs{x_j-y_j}^4,
\]
and assuming that $u$ is Lipschitz, this gives
\[
\frac{j}4\abs{x_j-y_j}^4 \le u(x_j)-u(y_j) \le C \abs{x_j-y_j},
\]
and hence, after dividing by $\abs{x_j-y_j}^{1-\eps}$, we obtain $j\abs{x_j-y_j}^{3+\eps}\to 0$ as
$j\to\infty$ for any $\eps>0$. The last displayed inequality in the proof of Proposition
\ref{norm_strictcomp} gives
\[
0<\eps\le Cj\abs{x_j-y_j}^2\abs{p(x_j)-p(y_j)}^2 \le Cj\abs{x_j-y_j}^{2(1+\alpha)},
\]
and as $2(1+\alpha)>3$, we get a contradiction.
\end{remark}

\begin{remark}
Suppose that the following holds for an upper semicontinuous function
$u$: whenever $x_0\in\Om$ and $\vp \in C^2(\Om)$ are such that $u(x_0) = \vp(x_0)$, $u(x) < \vp(x)$ for $x \neq x_0$,
\emph{and} $D\vp(x_0)\ne 0$, we have
\[
-\Delta^N_{p(x)} \vp(x_0)\le 0.
\]
Then $u$ is a viscosity subsolution of $-\Delta^N_{p(x)} v(x)= 0$. In other words, the test-functions with vanishing gradient
at the point of touching can be completely ignored.

To see this, we argue by contradiction, and suppose that a function $u$ satisfying the condition above
is not a subsolution. Then there is $x_0\in\Om$ and
$\vp \in C^2(\Om)$ touching $u$ from above at $x_0$ such that $D\vp(x_0)= 0$, $D^2\vp(x_0)\ne 0$, and
\[
\begin{cases}
-\Delta \vp(x_0)-(p(x_0)-2)\lambda_{\min}(D^2\vp(x_0))> \eps,& \text{if $p(x_0)< 2$},\\
-\Delta \vp(x_0)-(p(x_0)-2)\lambda_{\max}(D^2\vp(x_0))>\eps,& \text{if $p(x_0)\ge 2$}
\end{cases}
\]
for some $\eps>0$. In particular, $\vp$ is a viscosity supersolution of $-\Delta^N_\p v(x)=\eps$ in some small ball
$B(x_0,\delta)$ and $u\le \vp$ on $\partial B(x_0,\delta)$. We can now run the proof of Proposition \ref{norm_strictcomp}
with $v$ replaced by $\vp$ and $\Om$ by $B(x_0,\delta)$, and contradict the assumption that $u-\vp$ has an interior
maximum at $x_0$. Therefore such a test-function cannot exist, and $u$ is a subsolution as claimed.
\end{remark}


\begin{thebibliography}{99}


\bibitem{AcerbiMingione}
{\rm Acerbi, E., and Mingione, G.}
Regularity results for a class of functionals with non-standard growth.
Arch. Ration. Mech. Anal. 156(2):121--140, 2001.

\bibitem{AcerbiMingione2}
{\rm Acerbi, E., and Mingione, G.}
  Regularity results for stationary electro-rheological fluids. Arch. Ration. Mech. Anal. 164:213-259, 2002.

\bibitem{hastoa10}
{\rm Adamowicz, T., and H\"ast\"o, P.}
Mappings of finite distortion and PDE with nonstandard growth.
to appear in Int. Math. Res. Not. IMRN.

\bibitem{alkhutov97} {\rm Alkhutov, Y. A.}, The {H}arnack inequality
  and the {H}\"older property of solutions of nonlinear elliptic
  equations with a nonstandard growth condition, Differ. Uravn.,
  33(12):1651--1660, 1997.



\bibitem{cosciam99}
{\rm Coscia, A., and Mingione, G.}
H\"older continuity of the gradient of {$p(x)$}-harmonic
              mappings,
              C. R. Acad. Sci. Paris S\'er. I Math., 328(4):363--368, 1999.

\bibitem{crandall}
{\rm Crandall, M. G.},
Viscosity solutions: a primer, Viscosity solutions and applications
(Montecatini Terme, 1995), Lecture Notes in Math. 1660, Springer,
Berlin, 1997, 1--67.

\bibitem{crandallil92}
{\rm Crandall, M. G., Ishii, H., and Lions,
              P.-L.},
User's guide to viscosity solutions of second order partial
              differential equations, Bull. Amer. Math. Soc. (N.S.), 27(1):1--67, 1992.

\bibitem{Diening}
{\rm Diening, L.},
Maximal function on generalized Lebesgue spaces $L^{p(\cdot)}$,
Math. Inequal. Appl., 7(2):245--253, 2004.

 \bibitem{edmundsr00}
 {\rm Edmunds, D. E., R{\'a}kosn{\'{\i}}k, J.},
 Sobolev embeddings with variable exponent, Studia Math. 143(3):267--293, 2000.

\bibitem{FanZhaoQM}
{\rm Fan, X., Zhao, D.}
A class of De Giorgi type and H\"older continuity.
Nonlinear Anal. 36(3):295--318, 1999.



  \bibitem{harjulehtohklm07} {\rm Harjulehto, P., H{\"a}st{\"o}, P.,
      Koskenoja, M., Lukkari, T., and Marola, N.}, An obstacle problem
    and superharmonic functions with nonstandard growth, Nonlinear
    Anal., 67(12):3424--3440.

\bibitem{HKL}
{\rm Harjulehto, P., Kinnunen, J., and Lukkari, T. }
Unbounded supersolutions of nonlinear equations with nonstandard growth,
Bound. Value Probl. 2007, Art. ID 48348, 1-20, 2007.


\bibitem{hornj85}
{\rm Horn, R. A., Johnson, C. R.},
Matrix analysis. Cambridge University Press, Cambridge, 1985.

\bibitem{jensen88}
{\rm Jensen, R.},
The maximum principle for viscosity solutions of fully nonlinear second order partial differential equations,
 Arch. Rational Mech. Anal.,  101(1):1--27, 1988.

\bibitem{juutinenl04}
{\rm Juutinen, P., and Lindqvist, P.},
A theorem of {R}ad\'o's type for the solutions of a quasi-linear equation,
Math. Res. Lett., 11(1):31--34, 2004.

\bibitem{juutinenl05}
{\rm Juutinen, P., and Lindqvist, P.},
Removability of a level set for solutions of quasilinear equations,
Comm. Partial Differential Equations  30  (2005),  no. 1-3, 305--321.

\bibitem{juutinenlm01}
{\rm Juutinen, P., Lindqvist, P., and Manfredi, J. J.},
On the equivalence of viscosity solutions and weak solutions
              for a quasi-linear equation, SIAM J. Math. Anal. 33(3):699--717, 2001.



\bibitem{koike04}
{\rm Koike, S.}, A beginner's guide to the theory of viscosity solutions,
MSJ Memoirs 13, Mathematical Society of Japan, Tokyo, 2004.



\bibitem{kovacikr91}
{\rm Kov{\'a}{\v c}ik, O., and R{\'a}kosn{\'\i}k, J.},
On spaces {$L^{p(x)}$} and
  {$W^{1,p(x)}$}, Czechoslovak Math.\ J. 41(116):592--618, 1991.

\bibitem{CLR} {\rm Chen, Y., Levine, S., and Rao, M.,} Variable
  exponent, linear growth functionals in image restoration. (English
  summary) SIAM J. Appl. Math. 66 (2006), no. 4, 1383--1406
  (electronic).


\bibitem{lindqvist86} {\rm Lindqvist, P.}, On the definition and
  properties of {$p$}-superharmonic functions, J. Reine Angew. Math.,
  365:67--79, 1986.


\bibitem{manfrediru09} {\rm Manfredi, J. J., Rossi, J. D., and Urbano,
    J.M.}, $p(x)$-harmonic functions with unbounded exponent in a
  subdomain, Ann. Inst. H. Poincar\'e Anal. Non Lin\'eaire, 26:2581--2595,
  2009.

\bibitem{martio88} {\rm Martio, O.}, Counterexamples for unique
  continuation.  Manuscripta Math.  60 (1988), no. 1, 21--47.

\bibitem{peresssw09} {\rm Peres, Y., Schramm, O., Sheffield, S., and
    Wilson, D. B.}, Tug-of-war and the infinity Laplacian,
  J. Amer. Math. Soc., 22:167--210, 2009.

\bibitem{Ruzicka} R\ocirc u\v{z}i\v{c}ka, M., Electrorheological fluids:
  modeling and mathematical theory. Lecture Notes in Mathematics,
  1748. Springer-Verlag, Berlin, 2000. xvi+176 pp.

\bibitem{samko02} {\rm Samko, S.}, Denseness of {$C\sp \infty\sb
    0(\mathbf R\sp N)$} in the generalized {S}obolev spaces {$W\sp
    {m,p(x)}(\mathbf R\sp N)$}, in: Direct and inverse problems of
  mathematical physics (Newark, DE, 1997), vol.~5 of Int. Soc.
  Anal. Appl. Comput., Kluwer Acad. Publ., Dordrecht, 333--342, 2000.

\bibitem{Zhikov} {\rm Zhikov, V. V.}  Averaging of functionals of the
  calculus of variations and elasticity theory. (Russian)
  Izv. Akad. Nauk SSSR Ser. Mat. 50 (1986), no. 4, 675--710, 877.
  English translation: Math. USSR-Izv. 29 (1987), no. 1, 33--66.

\end{thebibliography}
\end{document}